\numberwithin{equation}{section}
\newtheorem{thm}{Theorem}[section]
\newtheorem{theorem}[thm]{Theorem}
\newtheorem{proposition}[thm]{Proposition}
\newtheorem{lemma}[thm]{Lemma}
\newtheorem{corollary}[thm]{Corollary}
\theoremstyle{definition}
\newtheorem{definition}[thm]{Definition}
\newtheorem{example}[thm]{Example}
\theoremstyle{remark}
\newtheorem{remark}[thm]{Remark}
\newcommand{\RR}{\mathbb{R}}
\newcommand{\ZZ}{\mathbb{Z}}
\newcommand{\lN}{\overleftarrow{N}}
\newcommand{\rN}{\overrightarrow{N}}
\newcommand{\tPhi}{\tilde{\Phi}}
\newcommand{\hPsi}{\hat{\Psi}}
\newcommand{\hgamma}{\hat{\gamma}}
\newcommand{\XXX}{\mathcal{X}}
\newcommand{\III}{\mathcal{I}}
\newcommand{\NNN}{\mathcal{N}}
\newcommand{\DDD}{\mathcal{D}}
\newcommand{\EEE}{\mathcal{E}}
\newcommand{\CCC}{\check{\mathrm{C}}}
\newcommand{\tX}{\tilde{X}}
\newcommand{\rNNN}{\overrightarrow{\mathcal{N}}}
\newcommand{\lNNN}{\overleftarrow{\mathcal{N}}}
\newcommand{\JJJ}{\mathcal{J}}
\newcommand{\conv}{\mathrm{conv}}
\newcommand{\link}{\mathrm{link}}
\newcommand{\Bor}{\mathrm{Bor}}
\title[Closed neighborhood complex]{Closed neighborhood complexes of graphs}
\author{Takahiro Matsushita}
\address{Department of Mathematical Sciences, Faculty of Science, Shinshu University, Matsumoto, Nagano 390-8621, Japan}
\email{matsushita@shinshu-u.ac.jp}
\subjclass[2020]{Primary 05C69; Secondary 05C20, 55U10}
\keywords{neighborhood complex; independence complex; canonical double covering; neighborhood hypergraph}
\begin{document}


\maketitle

\begin{abstract}
The closed neighborhood complex $\mathcal{N}[G]$ of a simple graph $G$ is the simplicial complex whose simplices are finite sets of vertices contained in a closed neighborhood of a vertex in $G$. We reveal that the closed neighborhood complex has close connections with other concepts, including the independence complex of the canonical double covering and the independence complex of the neighborhood hypergraph. Furthermore, we show that the fundamental group of the closed neighborhood complex is isomorphic to Grigor'yan--Lin--Muranov--Yau's fundamental group of a graph introduced in the study of path homology.
\end{abstract}

\section{Introduction} \label{section introduction}

\subsection{Background and main object of this paper}

The neighborhood complex $\NNN(G)$ of a graph $G$ was introduced by Lov\'asz in his celebrated proof of the Kneser conjecture \cite{Lovasz}. Lov\'asz showed that there is a close relationship between the chromatic number $\chi(G)$ of a graph $G$ and the connectivity of $\NNN(G)$, and determined the chromatic numbers of the Kneser graphs. After that, the neighborhood complex and its generalizations have been studied by many authors (see \cite{BK1, BK2, DM, MatsushitaJMSUT, MW, Zivaljevic} for example).

To introduce the main object in this paper, we first recall the definition of the neighborhood complex. Let $G$ be a simple graph, and $v$ a vertex of $G$. The \emph{open neighborhood $N_G(v)$ of $v$} is the set of vertices adjacent to $v$. Then, the neighborhood complex $\NNN(G)$ is the abstract simplicial complex defined as follows: The ground set of $\NNN(G)$ is the vertex set $V(G)$ of $G$, and a finite subset $\sigma$ of $V(G)$ is a simplex of $\NNN(G)$ if and only if $\sigma$ is contained in the open neighborhood $N_G(v)$ of some vertex $v$ of $G$.

Recall that the \emph{closed neighborhood $N_G[v]$ of $v$ in $G$} is defined as $N_G(v) \cup \{ v\}$. Then, it is natural to consider what happens if a closed neighborhood is employed instead of an open neighborhood in the definition of the neighborhood complex. This natural consideration yields the object we mainly deal with in this paper:

\begin{definition}
The \emph{closed neighborhood complex $\NNN[G]$ of a simple graph $G$} is the abstract simplicial complex defined as follows: The ground set of $\NNN[G]$ is the vertex set $V(G)$ of $G$, and the set of simplices is
\[ \NNN[G] = \{ \sigma \subset V(G) \mid \textrm{$\# \sigma < \infty$ and there is $v \in V(G)$ such that $\sigma \subset N_G[v]$}\}.\]
\end{definition}

The \emph{reflexive closure $G^\circ$ of a simple graph $G$} is the reflexive graph obtained by adding a loop to each vertex. Then, the closed neighborhood complex $\NNN[G]$ of $G$ is the neighborhood complex $\NNN(G^\circ)$ of the reflexive closure of $G$. The neighborhood complex in fact was studied for graphs with loops (see \cite{BK1, DochtermannEJC, MatsushitaJMSUT} for example). Hence, the closed neighborhood complex introduced above is not new to the literature, and in fact there is some work (see \cite{ASS} for example) concerning the closed neighborhood complexes of specific graphs. However, since the neighborhood complex has primarily been studied in the context of the graph coloring problem, the neighborhood complex for graphs with loops has generally not been treated as a central topic.

On the other hand, in this paper we deal with the closed neighborhood complex. One reason is that the closed neighborhood complex has close connections with various concepts in combinatorics and topology. Furthermore, the closed neighborhood complex includes important examples such as \v{C}ech complexes, which are a fundamental object in applied topology (see \cite{AA, Carlsson, EH}).

We now recall the definition of \v{C}ech complex of a metric space $(X,d)$. Let $r$ be a non-negative real number. Then, the \emph{\v{C}ech complexes $\CCC(X,r)$ and $\CCC[X,r]$} are the abstract simplicial complexes whose vertex set is $X$ and whose set of simplices is
\[ \CCC(X,r) = \{ \sigma \subset X \mid \text{$\# \sigma < \infty$ and there is $a \in X$ such that $\sigma \subset B(a, r)$}\},\]
\[ \CCC[X,r] = \{ \sigma \subset X \mid \text{$\# \sigma < \infty$ and there is $a \in X$ such that $\sigma \subset B[a, r]$}\}.\]
Here, $B(a,r)$ (or $B[a,r]$) denotes the open (or closed, respectively) ball of radius $r$ centered at $a$.

The \v{C}ech complexes are realized as the closed neighborhood complexes of the graphs $G(X, r)$ and $G[X,r]$. The \emph{neighborhood graphs $G(X, r)$ and $G[X,r]$ of $(X,d)$} is the graph defined as follows: The vertex set of $G(X,r)$ and $G[X,r]$ are $X$, and
\[ E(G(X,r)) = \{ (x,y) \in X^2 \mid 0 < d(x,y) < r \},\]
\[ E(G[X,r]) = \{ (x,y) \in X^2\mid 0 < d(x,y) \le r\}.\]
Then, the closed neighborhood complexes $\NNN[G(X,r)]$ and $\NNN[G[X,r]]$ are the \v{C}ech complexes $\CCC(X,r)$ and $\CCC[X,r]$ of $X$, respectively.

In the study of the \v{C}ech complexes $\CCC(X,r)$ and $\CCC[X,r]$, $X$ is often not assumed to be finite (see \cite{AA} for example). Thus, unless otherwise stated, our graphs are undirected, simple and not necessarily finite.

\subsection{Connections with other concepts}

The closed neighborhood complex has close relationships with the independence complex of the canonical double covering and the independence complex of the neighborhood hypergraph. All necessary definitions will be given in subsequent sections.

Let $G = (V(G), E(G))$ be a graph. Recall that a subset $\sigma$ of $V(G)$ is \emph{independent in $G$} if no two elements of $\sigma$ are adjacent in $G$. The \emph{independence complex $\III(G)$ of $G$} is the simplicial complex whose ground set is the vertex set $V(G)$ of $G$ and whose set of simplices is the set of finite independent subsets of $G$. The independence complex of a simple graph is one of the central topics in topological combinatorics, and has been studied by many authors (see \cite{AdamaszekJCTA, BC, Bravo2025, EH2, GSS, KozlovJCTA, MWTA, MWHHA} for example).

Let $K_2$ denote the complete graph consisting of two vertices. Then, the categorical product $K_2 \times G$ is called the \emph{canonical double covering of $G$}. The following theorem reveals a close relationship between the closed neighborhood complex $\NNN[\overline{G}]$ of the complement of $G$ and the independence complex $\III(K_2 \times G)$ of the canonical double covering of $G$:

\begin{theorem} \label{theorem A}
Let $G$ be a graph. Then, we have the homotopy equivalence:
\[ \Sigma \NNN[\overline{G}] \simeq \III(K_2 \times G).\]
In particular, if $G$ is bipartite, then $\Sigma \NNN[\overline{G}] \simeq \III(G) * \III(G)$.
\end{theorem}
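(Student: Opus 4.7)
The plan is to run a Mayer--Vietoris decomposition of $L := \III(K_2 \times G)$. A simplex of $L$ has the form $A_0 \cup B_1$ with $A, B \subseteq V(G)$ finite and $E_G(A, B) = \emptyset$, writing $A_i := \{i\} \times A$. Define $L^+$ to be the union of the closed stars in $L$ of the vertices $(0, v)$, $v \in V(G)$, and $L^-$ symmetrically for the vertices $(1, v)$. A direct calculation shows $\sigma = A_0 \cup B_1 \in L^+$ iff $v \notin N_G(B)$ for some $v$, iff $B \in \NNN[\overline{G}]$; symmetrically $\sigma \in L^-$ iff $A \in \NNN[\overline{G}]$. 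Hence $L = L^+ \cup L^-$: if $B \notin \NNN[\overline{G}]$ then $N_G(B) = V(G)$, which together with $E_G(A, B) = \emptyset$ forces $A \subseteq V(G) \setminus N_G(B) = \emptyset$, and $\emptyset \in \NNN[\overline{G}]$.

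Next I would show $L^{\pm} \simeq \ast$ via the nerve lemma. The cover of $L^+$ by the closed stars of the $(0, v)$ consists of cones; any nonempty intersection $\bigcap_{v \in S} \mathrm{st}_L((0, v))$ equals the closed star of the simplex $\{(0, v) : v \in S\}$ (which lies in $L$ because any subset of $\{0\} \times V(G)$ is $L$-independent), itself a cone. Thus every nonempty intersection is contractible and the nerve is the full simplex on $V(G)$, so $L^+ \simeq \ast$; the argument for $L^-$ is symmetric.

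The crux is showing $L^+ \cap L^- \simeq \NNN[\overline{G}]$. The subcomplex $L^+ \cap L^-$ consists of simplices $A_0 \cup B_1$ with $A, B \in \NNN[\overline{G}]$ and $E_G(A, B) = \emptyset$. The naive map $(i, v) \mapsto v$ is \emph{not} simplicial: one can construct $A, B \in \NNN[\overline{G}]$ with $E_G(A, B) = \emptyset$ but $A \cup B \notin \NNN[\overline{G}]$ (arrange $G$ so that every vertex is $G$-adjacent to some element of $A \cup B$). Instead I would cover $L^+ \cap L^-$ by the subcomplexes
\[
X_v := \{A_0 \cup B_1 \in L^+ \cap L^- : A \cup B \subseteq N_{\overline{G}}[v]\}, \qquad v \in V(G),
\]
each a cone with apex $(0, v)$ (the condition $B \subseteq N_{\overline{G}}[v]$ gives $v \notin N_G(B)$, so adjoining $(0, v)$ preserves the defining conditions). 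The main obstacle I foresee is that these $X_v$ need not cover $L^+ \cap L^-$; the simplices with $A \cup B \notin \NNN[\overline{G}]$ are left out. I would try to dispose of them via a discrete Morse matching collapsing $L^+ \cap L^-$ onto the subcomplex $\{A_0 : A \in \NNN[\overline{G}]\} \cong \NNN[\overline{G}]$.

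Granting the third step, the homotopy pushout $L = L^+ \cup_{L^+ \cap L^-} L^-$ with $L^{\pm} \simeq \ast$ yields $L \simeq \Sigma(L^+ \cap L^-) \simeq \Sigma \NNN[\overline{G}]$. The bipartite case is direct and independent of the analysis above: for a bipartition $V(G) = X \sqcup Y$, the graph $K_2 \times G$ splits into the two connected components $(\{0\} \times X) \cup (\{1\} \times Y)$ and $(\{1\} \times X) \cup (\{0\} \times Y)$, each canonically isomorphic to $G$, giving $\III(K_2 \times G) = \III(G \sqcup G) = \III(G) \ast \III(G)$.
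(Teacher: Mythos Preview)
Your architecture—split $\III(K_2\times G)$ into two contractible pieces whose overlap has the homotopy type of $\NNN[\overline{G}]$—is exactly the paper's (stated there as the slightly more general Theorem~\ref{theorem Nagel--Reiner}). The divergence is in how the cut is made, and that is what creates your gap. The paper does not use closed stars; it uses the height function $f\colon |\III(K_2\times G)|\to[0,1]$ given by the total mass on $\{1\}\times V(G)$, and sets $A_1=f^{-1}([0,\tfrac12])$, $A_2=f^{-1}([\tfrac12,1])$. These retract linearly onto full simplices, so contractibility is immediate. The crucial gain is at the intersection $f^{-1}(\tfrac12)$: every point there has \emph{nonempty} level-$1$ support $B$, so for any $y\in B$ the independence condition $E_G(A,B)=\emptyset$ forces $A\subseteq N_{\overline{G}}[y]$. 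Hence the cover of $f^{-1}(\tfrac12)$ by the sets $V_y=\{\text{level-}0\text{ support}\subseteq N_{\overline{G}}[y]\}$ is a good cover with the same nerve as the cover $\{|\Delta^{N_{\overline{G}}[y]}|\}$ of $|\NNN[\overline{G}]|$, and one nerve comparison finishes.

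Your combinatorial $L^+\cap L^-$ is strictly larger than this slice and contains precisely the troublesome faces you flag (e.g.\ for $G=K_2\sqcup K_2$ with edges $a_1a_2,b_1b_2$, take $A=\{a_1,a_2\}$, $B=\{b_1,b_2\}$). The one-sided cover $W_y=\{A_0\cup B_1\in L^+\cap L^-:A\subseteq N_{\overline{G}}[y]\}$ does cover, but $W_y$ is not a cone on $(1,y)$ since adjoining $(1,y)$ can push $B\cup\{y\}$ out of $\NNN[\overline{G}]$; so the direct combinatorial analogue of the paper's nerve argument also fails. Your proposed discrete-Morse collapse onto $\{A_0:A\in\NNN[\overline{G}]\}$ may exist, but the acyclic matching is the entire content of the step and none is supplied—this is a genuine gap. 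The quickest repair is to adopt the height-function cut: your first two steps then reduce to straight-line homotopies, and the intersection step goes through by the nerve comparison above. Your bipartite coda is correct and matches the paper.
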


The following corollary is a restatement of the latter assertion of Theorem~\ref{theorem A}. Here, $\XXX(G)$ denotes the clique complex of a graph $G$ (see Section~\ref{section preliminaries} for the definition).

\begin{corollary} \label{corollary A}
Let $G$ be a graph such that $\overline{G}$ is bipartite, i.e., there are cliques $C_1$ and $C_2$ of $G$ such that $V(G) = C_1 \sqcup C_2$. Then, we have the following homotopy equivalence:
\[ \Sigma \NNN[G] \simeq \XXX(G) * \XXX(G).\]
\end{corollary}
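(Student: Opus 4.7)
The plan is to deduce Corollary~\ref{corollary A} as a direct reformulation of the second assertion of Theorem~\ref{theorem A}, applied to the complement $\overline{G}$ rather than to $G$. First I would check that the hypothesis matches: the assumption that $V(G) = C_1 \sqcup C_2$ for cliques $C_1, C_2$ of $G$ is equivalent to saying that $\overline{G}$ is bipartite, since a bipartition of $\overline{G}$ is precisely a partition of $V(G)$ into two independent subsets of $\overline{G}$, which by definition of the complement are two cliques of $G$. Thus Theorem~\ref{theorem A}, applied with $\overline{G}$ in place of $G$, yields
\[ \Sigma \NNN[\overline{\overline{G}}] \simeq \III(\overline{G}) * \III(\overline{G}).\]

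The remaining step is to identify both sides with their counterparts in the statement of the corollary. On the left, complementation of simple graphs is an involution, $\overline{\overline{G}} = G$, so $\Sigma \NNN[\overline{\overline{G}}] = \Sigma \NNN[G]$. On the right, a finite subset $\sigma \subset V(G)$ is independent in $\overline{G}$ if and only if no two distinct vertices of $\sigma$ are adjacent in $\overline{G}$, which by definition of the complement is equivalent to every two distinct vertices of $\sigma$ being adjacent in $G$; that is, $\sigma$ is a clique of $G$. Therefore $\III(\overline{G}) = \XXX(G)$ as simplicial complexes, and the right side becomes $\XXX(G) * \XXX(G)$. Combining these identifications with the homotopy equivalence above produces the desired $\Sigma \NNN[G] \simeq \XXX(G) * \XXX(G)$.

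As the corollary is advertised in the excerpt as a restatement of (part of) Theorem~\ref{theorem A}, I do not anticipate any substantive obstacle: every step reduces to the tautological identifications $\overline{\overline{G}} = G$ and $\III(\overline{G}) = \XXX(G)$, together with one invocation of Theorem~\ref{theorem A} as a black box. The only mild care needed is to verify the equivalence of the two formulations of the bipartiteness hypothesis, which is done in the first paragraph.
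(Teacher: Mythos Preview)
Your proof is correct and follows precisely the paper's approach: apply Theorem~\ref{theorem A} to $\overline{G}$ in place of $G$, then use the identifications $\overline{\overline{G}} = G$ and $\III(\overline{G}) = \XXX(G)$. The paper's proof is a one-line reference to these same facts, so your version is simply a more fully unpacked rendition of the same argument.
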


By Theorem~\ref{theorem A}, one can determine the homotopy type of the independence complex $\III(K_2 \times G)$ of the canonical double covering of $G$ by determining the homotopy type of $\NNN[\overline{G}]$. We will determine the homotopy type of the independence complexes of the canonical double coverings of Borsuk graphs $\Bor(S^1; a)$ and $\Bor[S^1 ; a]$ of a circle, utilizing the homotopy types of \v{C}ech complexes of a circle determined by Adamaszek--Adams \cite{AA} (Example~\ref{example Borsuk graph}). On the other hand, one can sometimes determine the homotopy type of $\NNN[G]$ by the homotopy type of $\III(K_2 \times G)$. As an example, we will determine the homotopy type of the closed neighborhood complex $\NNN(K_m \square K_n)$ of the cartesian product of the complete graph $K_m \square K_n$ by the homotopy type $\III(K_2 \times K_m \times K_n)$ determined by Antol\'in Camarena and Carnero Bravo \cite{BC} (Example~\ref{example cartesian}).

Next we mention the relationship between the closed neighborhood complex and the independence complex of the neighborhood hypergraph. In the previous work \cite{MW}, the author and Wakatsuki showed that the neighborhood complex $\NNN(\overline{G})$ of the complement $\overline{G}$ of $G$ coincides with the combinatorial Alexander dual (see Section~\ref{section preliminaries}) $\DDD(G)^\vee$ of the dominance complex $\DDD(G)$. In the case of the closed neighborhood complex, the independence complex $\III(\NNN_G)$ of the neighborhood hypergraph $\NNN_G$ appears instead of the dominance complex $\DDD(G)$.

The \emph{neighborhood hypergraph $\NNN_G$ of a finite graph $G$} is the hypergraph whose vertex set is $V(G)$, and whose set of hyperedges is the set of open neighborhoods of the vertices in $G$ (Definition~\ref{definition neighborhood hypergraph}).
It is straightforward to generalize the independence complex to hypergraphs (see Definition~\ref{definition hypergraph 2}), and we consider the independence complex $\III(\NNN_G)$ of the neighborhood hypergraph. In this case, a finite subset $\sigma$ of $V(G)$ is a simplex of $\III(\NNN_G)$ if and only if there is no $v \in V(G)$ such that $N_G(v) \subset \sigma$.
The following theorem holds:

\begin{theorem} \label{theorem neighborhood hypergraph}
Let $G$ be a finite graph. Then, $\III(\NNN_G)^\vee$ coincides with $\NNN[\overline{G}]$.
\end{theorem}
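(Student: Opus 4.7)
The strategy is a straightforward unwinding of the three relevant definitions—combinatorial Alexander dual, hypergraph independence complex, and closed neighborhood complex—so that the equality of simplex sets reduces to a single equivalence: ``$\sigma$ avoids some open neighborhood in $G$'' iff ``$\sigma$ lies in some closed neighborhood in $\overline{G}$.'' Since both sides are simplicial complexes on the ground set $V(G)$, it suffices to check that they have the same simplices.

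First, I would fix a finite $\sigma \subset V(G)$ and translate membership in $\III(\NNN_G)^\vee$. By the definition of the combinatorial Alexander dual, $\sigma \in \III(\NNN_G)^\vee$ if and only if $V(G) \setminus \sigma \notin \III(\NNN_G)$. By the definition of the independence complex of a hypergraph applied to $\NNN_G$, this failure of independence means precisely that $V(G) \setminus \sigma$ contains some hyperedge of $\NNN_G$, that is, there exists a vertex $v \in V(G)$ with $N_G(v) \subset V(G) \setminus \sigma$.

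Second, I would rewrite $N_G(v) \subset V(G) \setminus \sigma$ as $\sigma \cap N_G(v) = \emptyset$ and identify the latter with $\sigma \subset N_{\overline{G}}[v]$. The point is that for any vertex $u \in V(G)$, the condition $u \notin N_G(v)$ (i.e., $u$ is not adjacent to $v$ in $G$) is equivalent to $u = v$ or $u \in N_{\overline{G}}(v)$, which is exactly $u \in N_{\overline{G}}[v]$. Hence the existence of $v$ with $\sigma \cap N_G(v) = \emptyset$ is equivalent to the existence of $v$ with $\sigma \subset N_{\overline{G}}[v]$, and the latter is the defining condition for $\sigma \in \NNN[\overline{G}]$. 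Chaining the equivalences gives $\III(\NNN_G)^\vee = \NNN[\overline{G}]$.

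The argument is essentially a tautology once the definitions are placed side by side; there is no genuine obstacle. The only point requiring a moment's care is the transition from the open neighborhood in $G$ to the \emph{closed} neighborhood in $\overline{G}$: the vertex $v$ itself belongs to neither $N_G(v)$ nor $N_{\overline{G}}(v)$, but does belong to $N_{\overline{G}}[v]$, and this accounts for why the \emph{closed} neighborhood complex of $\overline{G}$ (rather than the open one) is the correct object on the right-hand side.
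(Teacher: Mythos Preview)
Your proof is correct and follows essentially the same approach as the paper: both arguments run through the identical chain of equivalences $\sigma \in \III(\NNN_G)^\vee \Leftrightarrow V(G)\setminus\sigma \notin \III(\NNN_G) \Leftrightarrow \exists\, v,\ N_G(v)\subset V(G)\setminus\sigma \Leftrightarrow \exists\, v,\ \sigma\subset N_{\overline{G}}[v] \Leftrightarrow \sigma \in \NNN[\overline{G}]$. Your explicit observation that $V(G)\setminus N_G(v) = N_{\overline{G}}[v]$ (because $v$ itself lies in neither open neighborhood) is exactly the key identity the paper uses in passing from its condition (3) to (4).
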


\subsection{Fundamental group of the closed neighborhood complex}

For a positive integer $k$, the author \cite{MatsushitaJMSUT} introduced the $k$-fundamental group $\pi_1^k(G,v)$ of a based graph $(G,v)$, and showed that the fundamental group of the neighborhood complex $\NNN(G)$ of $G$ is isomorphic to the subgroup $\pi_1^2(G,v)_{ev}$, which is of finite index in $\pi_1^2(G,v)$ and is called the even part. The study of $2$-fundamental groups of graphs was initiated by the author \cite{MatsushitaJMSUT} and has applications in several branches of graph theory, including the study of multiplicative graphs \cite{WrochnaJCTB1}, the homomorphism reconfiguration problem \cite{Matsushita_square-free} and the graph coloring problem in topological graph theory \cite{EM}.

The next result (Theorem~\ref{theorem B}) shows that the fundamental group of the closed neighborhood complex is isomorphic to the fundamental group of a graph introduced in the study of path homology. Path homology is a homology theory for directed graphs that was introduced in \cite{GLMY} and has been extensively studied in recent years.

In the study of path homology, Grigor'yan et al. \cite{GLMY} introduced the fundamental group $\pi_1^{\rm GLMY}(G,v)$ of a based graph $(G,v)$. Subsequently, Di et al. \cite{DIMZ} introduced the $k$-fundamental group $\pi_1^k[G,v]$ for each positive integer $k$, where $\pi_1^2[G,v]$ is isomorphic to $\pi_1^{\rm GLMY}(G,v)$. To distinguish this notion from the $k$-fundamental group of \cite{MatsushitaJMSUT}, we call the group $\pi_1^k[G,v]$ defined in \cite{DIMZ} the closed $k$-fundamental group in this paper. The closed $k$-fundamental groups are related to the magnitude homology \cite{HW} through the magnitude-path spectral sequence \cite{Asao} (see also \cite{KT}).

To state our theorem precisely, we introduce the closed $k$-neighborhood complex $\NNN^k[G]$ of a graph $G$ as follows. For a vertex $v$ of $G$, the \emph{closed $k$-neighborhood $N_G^k[v]$ of $v$ in $G$} is recursively defined by
\[ N_G^1[v] = N_G[v],\quad N_G^{k+1}[v] = \bigcup_{w \in N_G^k[v]} N_G[w].\]
Then, the \emph{closed $k$-neighborhood complex $\NNN^k[G]$ of $G$} is the abstract simplicial complex whose ground set is $V(G)$ and whose simplices are sets contained in some $k$-neighborhood. Note that the closed $1$-neighborhood complex $\NNN^1[G]$ coincides with the closed neighborhood complex $\NNN[G]$. Closed $k$-neighborhood complexes of hypercube graphs are studied in \cite{ASS}. Now, we are ready to state our next result:

\begin{theorem} \label{theorem B}
Let $(G,v)$ be a based graph and $k$ a positive integer. Then, there is an isomorphism
\[ \pi_1^{2k}[G,v] \cong \pi_1(\NNN^k[G],v)\]
that is natural with respect to based graph maps (see Definition~\ref{definition graph map}).
\end{theorem}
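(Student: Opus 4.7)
The plan is to construct mutually inverse group homomorphisms
\[ \Phi \colon \pi_1^{2k}[G,v] \longrightarrow \pi_1(\NNN^k[G], v), \qquad \Psi \colon \pi_1(\NNN^k[G], v) \longrightarrow \pi_1^{2k}[G,v],\]
following the template of the analogous result $\pi_1(\NNN(G)) \cong \pi_1^2(G,v)_{ev}$ in \cite{MatsushitaJMSUT}, but with two simplifications in the closed setting. First, because loops are present in the reflexive closure, the parity obstruction disappears, so no ``even part'' is needed. Second, since $N_G^k[u]$ is itself a simplex of $\NNN^k[G]$, any two vertices in a common $k$-ball of $G$ are connected by a walk of length at most $2k$ through their common midpoint, which is precisely the length scale governing $\pi_1^{2k}[G,v]$.

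For $\Phi$, I would start from the observation that any walk $v = v_0, v_1, \ldots, v_n = v$ in $G$ is automatically an edge-loop in $\NNN[G] \subset \NNN^k[G]$, because $\{v_i, v_{i+1}\} \subset N_G[v_i]$ is always a $1$-simplex. Concatenation of walks maps to concatenation of edge-loops, so this defines a homomorphism on the free group of based walks. To show that it descends to $\pi_1^{2k}[G,v]$, I would verify that each defining relation from \cite{DIMZ} — which identifies two walks agreeing outside a sub-walk lying inside a ball of combinatorial radius $k$ — is realized by a finite sequence of elementary $2$-simplex collapses in $\NNN^k[G]$, using that every sub-walk contained in some $N_G^k[u]$ is null-homotopic in the simplex spanned by $N_G^k[u]$. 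For $\Psi$, I would use the edge-path presentation of $\pi_1(\NNN^k[G], v)$: given an edge-loop $v = u_0, u_1, \ldots, u_m = v$, choose for each $i$ a witness $w_i$ with $u_i, u_{i+1} \in N_G^k[w_i]$, and walks $\alpha_i$ from $u_i$ to $w_i$ and $\beta_i$ from $w_i$ to $u_{i+1}$ in $G$, each of length at most $k$; their concatenation is a walk of length at most $2k$, and stringing these together produces a walk in $G$ representing the loop.

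The main obstacle will be the well-definedness of $\Psi$: one must show that the class in $\pi_1^{2k}[G,v]$ is independent of (i) the choice of witnesses $w_i$, (ii) the choice of the internal walks $\alpha_i$ and $\beta_i$, and (iii) the combinatorial homotopy class of the edge-loop, i.e., invariance under the $2$-simplex relations of $\NNN^k[G]$. The crucial case (iii) is as follows: if $\{x, y, z\} \subset N_G^k[u]$ is a $2$-simplex, then all three of its edges can be resolved using the single witness $u$, producing three walks that bound a configuration lying entirely in the ball $N_G^k[u]$ and are therefore identified in $\pi_1^{2k}[G,v]$ by the defining relations of \cite{DIMZ} at the length scale $2k$. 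Once both maps are established, $\Psi \circ \Phi$ is visibly the identity on walks (since a walk viewed as an edge-loop can use $w_i = v_i$, giving back the original walk), and $\Phi \circ \Psi$ is the identity because each $\alpha_i \beta_i$ is edge-path-homotopic in $\NNN^k[G]$ to the original edge $\{u_i, u_{i+1}\}$ inside the simplex $N_G^k[w_i]$. Naturality with respect to based graph maps is then immediate, since a graph homomorphism sends walks to walks and preserves the witness structure used in both constructions.
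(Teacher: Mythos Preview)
Your approach is correct and follows the same overall strategy as the paper: identify $\pi_1(\NNN^k[G],v)$ with the edge-path group $\EEE(\NNN^k[G],v)$, and build mutually inverse homomorphisms between it and $\pi_1^{2k}[G,v]$ using that an edge $\{u_i,u_{i+1}\}$ of $\NNN^k[G]$ is witnessed by some $w_i$ with $u_i,u_{i+1}\in N_G^k[w_i]$, hence by a length-$2k$ walk in $G$. Your $\Psi$ is exactly the paper's map $\Phi\colon \EEE(\NNN^k[G],v)\to\pi_1^{2k}[G,v]$. The one genuine difference is the other direction: you send a walk $(v_0,\ldots,v_n)$ in $G$ directly to the edge-path $(v_0,\ldots,v_n)$ in $\NNN[G]\subset\NNN^k[G]$, whereas the paper pads the walk to length $2k\lceil n/2k\rceil$ and then samples every $2k$-th vertex. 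Your choice is cleaner---it avoids the ceiling function, the case split on whether $2k\mid n$, and the ``shift'' argument for condition (B)$_{2k}$---at the cost of making the verification that the two maps are inverse slightly less symmetric (the paper's $\Phi$ and $\Psi$ visibly undo each other at the level of lengths). One small imprecision to fix: the relation (B)$_{2k}$ does not assert that the differing sub-walk lies in a \emph{single} ball; rather, each of the two length-$2k$ sub-walks lies in $N_G^k[\cdot]$ of its own midpoint, and that is what makes both null-homotopic rel endpoints in $\NNN^k[G]$.
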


Although Di et al. \cite{DIMZ} and Kishimoto--Tong \cite{KT} considered the closed $k$-fundamental group for based directed graphs, the assumption of Theorem~\ref{theorem B} that $G$ is a graph is essential. It is straightforward to extend the definition of the closed $k$-neighborhood complexes to directed graphs (Definition~\ref{definition closed k}). However, in Appendix, we show that the fundamental group of this generalization of the closed $k$-neighborhood complex is not isomorphic to the closed $2k$-fundamental group in general.

\subsection{Organization}
The rest of this paper is organized as follows: In Section~\ref{section preliminaries}, we review several notions and facts of graphs and simplicial complexes that we need in this paper. In Section~\ref{section independence}, we prove Theorem~\ref{theorem A}, and discuss its applications. In Section~\ref{section neighborhood hypergraph}, we prove Theorem~\ref{theorem neighborhood hypergraph}. In Section~\ref{section fundamental}, we recall the definition of closed $k$-fundamental group and prove Theorem~\ref{theorem B}. In Appendix we consider the case of digraphs.

\subsection*{Acknowledgement}
The author was supported in part by JSPS KAKENHI Grant Numbers JP23K12975. The author thanks Yasuhiko Asao for useful information on magnitude homology and path homology. The author also thanks Henry Adams, Samir Shukla and Anurag Singh for pointing out the connection between this paper and their work \cite{ASS}.

\section{Preliminaries} \label{section preliminaries}

In this section, we review several definitions and facts of graphs and simplicial complexes, following \cite{GR, HN, Kozlovbook}.

\subsection{Graphs}

A \emph{directed graph} or a \emph{digraph} is a pair $G = (V(G), E(G))$ consisting of a (not necessarily finite) set $V(G)$ and a subset $E(G)$ of $V(G)^2 \setminus \Delta_{V(G)}$. Here, $\Delta_{V(G)}$ is the diagonal set of $V(G)$, i.e., $\Delta_{V(G)} = \{ (v,v) \mid v \in V(G)\}$. We call $V(G)$ the \emph{vertex set of $G$}, and $E(G)$ the \emph{edge set of $G$}. A digraph $G$ is called a \emph{graph} if $(x,y) \in E(G)$ implies $(y,x) \in E(G)$. Namely, our graphs are undirected, simple but not necessarily finite.

The \emph{complement $\overline{G}$ of a graph $G$} is the graph defined by
\[ V(\overline{G}) = V(G),\quad E(\overline{G}) = \{ (v,w) \in V(G) \times V(G) \mid \text{$v \ne w$ and $(v,w) \not\in E(G)$}\}.\]

The \emph{cartesian product $G \square H$ of graphs $G$ and $H$} is the graph defined as follows: The vertex set of $G \square H$ is $V(G) \times V(H)$, and two distinct elements $(v_1, w_1), (v_2, w_2)$ of $V(G)^2$ are adjacent in $G \square H$ if and only if one of the following conditions holds:
\begin{itemize}
\item $v_1 = v_2$ and $(w_1, w_2) \in E(H)$.

\item $(v_1, v_2) \in E(G)$ and $w_1 = w_2$.
\end{itemize}

The \emph{categorical product $G \times H$ of graphs $G$ and $H$} is the graph defined by $V(G \times H) = V(G) \times V(H)$ and 
\[ E(G \times H) = \{ ((v_1, w_1),(v_2, w_2)) \mid (v_1, v_2) \in E(G), (w_1, w_2) \in E(H)\}.\]

Let $K_n$ be the complete graph with $n$ vertices, i.e., $V(K_n) = \{ 1, \cdots, n\}$ and $E(K_n) = \{ (x,y) \mid x,y \in V(K_n),\ x \ne y\}$. Then, the categorical product $K_2 \times G$ is called the \emph{canonical double covering} or \emph{bipartite double covering}.

If $G$ is a non-bipartite connected graph, then $K_2 \times G$ is the unique connected double covering that is bipartite. If $G$ is bipartite, then $K_2 \times G$ is isomorphic to the disjoint union $G \sqcup G$ (see \cite{MatsushitaDM, Waller} for example).

\subsection{Simplicial complexes}

Let $X$ be a set. Let $2^X$ denote the power set of $X$, and $2^{(X)}$ the family of finite subsets of $X$. An \emph{(abstract) simplicial complex $K$ on the ground set $X$} is a subset $K \subset 2^{(X)}$ that satisfies the following condition: $\sigma \in K$ and $\tau \subset \sigma$ imply $\tau \in K$. In this paper, we assume that every simplicial complex contains the empty set $\emptyset$. A \emph{vertex of $K$} is $v \in X$ such that $\{ v\} \in K$. The set of vertices of $K$ is called the \emph{vertex set of $K$}, and is denoted by $V(K)$.

Now we recall the definition of the geometric realization $|K|$ of $K$. For a set $X$, we write $\RR^{(X)}$ to mean the $\RR$-module freely generated by $X$ with basis
\[ \{ e_x \mid x \in X\}.\]
We consider the topology of $\RR^{(X)}$ to be the inductive limit topology with respect to finite-dimensional subspaces. Namely, a subset $F$ of $\RR^{(X)}$ is closed if and only if for every finite subset $A \subset X$, $F \cap \RR^{(A)}$ is closed in $\RR^{(A)}$.

For a subset $S$ of a real vector space $V$, let $\conv(S)$ denote the convex hull of $S$. Let $K$ be a simplicial complex. For $\sigma \in K$, set
\[ |\sigma| = \conv \{ e_x \mid x \in \sigma \} \subset \RR^{(V(K))}.\]
Then, the \emph{geometric realization of $K$} is defined by
\[ |K| = \bigcup_{\sigma \in K} |\sigma|.\]
We regard $|K|$ as a topological space equipped with the subspace topology in $\RR^{(V(K))}$.

A map $f \colon V(K) \to V(L)$ is called a \emph{simplicial map} if $\sigma \in K$ implies $f(\sigma) \in L$. Then, the simplicial map $f$ induces a continuous map $|f| \colon |K| \to |L|$.

Let $K$ be a simplicial complex and $v$ a vertex of $K$. Define the \emph{deletion $K - v$} and the \emph{link $\link_K(v)$} to be the subcomplexes
\[ K - v = \{ \sigma \in K \mid v \not\in \sigma \},\quad \link_K(v) = \{ \sigma \in K - v \mid \sigma \cup \{ v\} \in K\}\]
of $K$. Then, $|K|$ is homeomorphic to the mapping cylinder of the inclusion $|\link_K(v)| \hookrightarrow |K - v|$. In particular, if $|K - v|$ is contractible, then $|K| \simeq \Sigma |\link_K(v)|$, where $\Sigma$ denotes the suspension.

Let $K$ be a simplicial complex on a finite ground set $X$. The \emph{combinatorial Alexander dual $K^\vee$ of $K$} is the simplicial complex whose ground set is $X$ and whose set of simplices is
\[ K^\vee = \{ \sigma \subset X \mid X - \sigma \not\in K\}.\]

\begin{theorem}[combinatorial Alexander duality theorem \cite{BT}] \label{theorem Alexander}
Let $K$ be a simplicial complex whose ground set $X$ is finite, and $R$ a commutative ring with unit. Then, there is the following isomorphism:
\[ \tilde{H}^{\# X - k - 3}(K ; R) \cong \tilde{H}_k(K; R).\]
\end{theorem}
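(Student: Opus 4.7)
The plan is to reduce Theorem~\ref{theorem Alexander} to classical topological Alexander duality in the sphere $S^{\#X - 2}$, realized as the boundary $|\partial \Delta^X|$ of the full simplex on the ground set $X$. (I read the right-hand side of the stated isomorphism as $\tilde{H}_k(K^\vee;R)$, since that is the content asserted by the Björner--Tancer reference.)

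First, I would embed both $|K|$ and a geometric copy of $|K^\vee|$ into $|\partial \Delta^X|$. The subspace $|K|$ sits there tautologically. For the dual, I would pass to the barycentric subdivision $\mathrm{sd}(\partial \Delta^X)$, whose vertices are the nonempty proper subsets of $X$, and take the subcomplex $L$ spanned by the barycenters of those $\sigma$ with $\sigma \notin K$. Under the complementation bijection $\sigma \mapsto X \setminus \sigma$ on nonempty proper subsets of $X$, a flag $\sigma_1 \subsetneq \cdots \subsetneq \sigma_r$ of non-simplices of $K$ corresponds to a flag $X \setminus \sigma_1 \supsetneq \cdots \supsetneq X \setminus \sigma_r$ of simplices of $K^\vee$, so $L$ is canonically isomorphic as a simplicial complex to $\mathrm{sd}(K^\vee)$, and in particular $|L| \cong |K^\vee|$.

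Second, and this is the crux, I would show that $|L|$ is a deformation retract of $|\partial \Delta^X| \setminus |K|$. Every point of $|\partial\Delta^X|$ lies in a unique open simplex of $\mathrm{sd}(\partial \Delta^X)$, spanned by barycenters of some flag $\sigma_1 \subsetneq \cdots \subsetneq \sigma_r$, and the point belongs to $|K|$ exactly when its minimal carrier $\sigma_1$ lies in $K$. For points outside $|K|$, a natural deformation shrinks the barycentric coefficients of those $\sigma_i \in K$ to zero and rescales the remaining coefficients, landing inside $|L|$. An equivalent and cleaner route is to cover $|\partial\Delta^X| \setminus |K|$ by the open stars in $\mathrm{sd}(\partial\Delta^X)$ of the vertices of $L$, check that all nonempty intersections are contractible, and invoke the nerve lemma to identify the nerve of this cover with $L$. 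Either route produces a homotopy equivalence $|\partial\Delta^X| \setminus |K| \simeq |K^\vee|$.

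Third, classical topological Alexander duality applied to the compact pair $|K| \subset S^{\#X - 2}$ reads
\[ \tilde{H}_k\bigl(S^{\#X - 2} \setminus |K|;\, R\bigr) \cong \tilde{H}^{\#X - k - 3}(|K|;\, R),\]
and combining this with the homotopy equivalence of the previous paragraph yields the claimed isomorphism. The main obstacle is the deformation retraction step: all the combinatorics of the duality $\sigma \leftrightarrow X \setminus \sigma$ is packed into the verification that the open stars of the vertices of $L$ actually cover $|\partial\Delta^X| \setminus |K|$ and have the right nerve, or equivalently that the explicit barycentric-coordinate retraction is well defined and continuous. Once this geometric step is in place, the rest is purely formal.
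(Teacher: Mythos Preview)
The paper does not prove Theorem~\ref{theorem Alexander}; it is stated there without proof and attributed to Bj\"orner--Tancer \cite{BT}. Your reading of the right-hand side as $\tilde{H}_k(K^\vee;R)$ is correct (the displayed statement in the paper has a typo), and your argument---embedding $|K|$ in $|\partial\Delta^X|\cong S^{\#X-2}$, identifying the barycentric subcomplex on non-faces of $K$ with $\mathrm{sd}(K^\vee)$ via complementation, exhibiting this subcomplex as a deformation retract of the complement, and then invoking classical topological Alexander duality---is the standard proof and matches the one in the cited reference.
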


\subsection{Graph complexes}

In this subsection, we review several simplicial complexes obtained from graphs. Throughout this subsection, $G$ denotes a graph.

A subset $\sigma$ of $V(G)$ is said to be \emph{independent in $G$} if no two elements in $\sigma$ are adjacent. The \emph{independence complex $\III(G)$ of $G$} is the simplicial complex whose ground set is $V(G)$ and whose set of simplices is the set of finite independent sets in $G$. For graphs $G$ and $H$, the independence complex $\III(G \sqcup H)$ of the disjoint union of $G$ and $H$ coincides with the join $\III(G) * \III(H)$ of $\III(G)$ and $\III(H)$.

A subset $\sigma$ of $V(G)$ is a \emph{clique in $G$} if any two elements in $\sigma$ are adjacent in $G$. Define the \emph{clique complex $\XXX(G)$ of $G$} to be the simplicial complex whose set of simplices is the set of finite cliques in $G$. It is clear that the clique complex $\XXX(\overline{G})$ of the complement of $G$ coincides with the independence complex $\III(G)$ of $G$, and that the independence complex $\III(\overline{G})$ of the complement of $G$ coincides with the clique complex $\XXX(G)$ of $G$.

\section{Independence complex of the canonical double covering} \label{section independence}

\subsection{Proof of Theorem~\ref{theorem A}}

The goal of this subsection is to prove Theorem~\ref{theorem A}, which states that $\Sigma \NNN[\overline{G}]$ is homotopy equivalent to $\III(K_2 \times G)$. Theorem~\ref{theorem A} is immediately deduced from the following theorem:

\begin{theorem} \label{theorem Nagel--Reiner}
Let $X$ and $Y$ be (possibly infinite) sets, and $\varphi \colon Y \to 2^X$ a map. Let $K$ be a simplicial complex on $X$ generated by $\{ \varphi(y) \mid y \in Y\}$, i.e., a finite subset $\sigma \subset X$ is a simplex of $K$ if and only if there is $y \in Y$ such that $\sigma \subset \varphi(y)$. Let $H$ be the bipartite graph with bipartition $X \sqcup Y$, where $x \in X$ and $y \in Y$ are adjacent if and only if $x \not\in \varphi(y)$. Then, we have $\III(H) \simeq \Sigma K$.
\end{theorem}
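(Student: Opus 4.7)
The plan is to exhibit $|\III(H)|$ as a homotopy pushout of two contractible subcomplexes glued along $|K|$, which is precisely the unreduced suspension $\Sigma|K|$. Unwinding the definitions, since $H$ is bipartite with parts $X$ and $Y$, a finite subset $A\sqcup B\subseteq X\sqcup Y$ is a simplex of $\III(H)$ if and only if $A\subseteq \varphi_B:=\bigcap_{y\in B}\varphi(y)$ (with the convention $\varphi_\emptyset=X$); in particular, $K$ sits inside $\III(H)$ as a subcomplex of the full simplex $2^{(X)}$ on the $X$-side.

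The main construction introduces two subcomplexes of $\III(H)$: the full simplex $2^{(X)}$, and
\[
V := \bigcup_{y\in Y}\bigl(\{y\}*\link_{\III(H)}(y)\bigr),
\]
the union of closed stars of the $Y$-vertices. A short check shows $2^{(X)}\cup V = \III(H)$ (a simplex with $B=\emptyset$ lies in $2^{(X)}$; otherwise any $y\in B$ places it in $\{y\}*\link(y)\subseteq V$) and $2^{(X)}\cap V = K$ (a finite $A\subseteq X$ lies in $V$ iff $A\cup\{y\}\in\III(H)$ for some $y\in Y$, iff $A\subseteq\varphi(y)$ for some $y$, iff $A\in K$). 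Because subcomplex inclusions are cofibrations, $|\III(H)|$ is the homotopy pushout of $|2^{(X)}|\leftarrow |K|\to |V|$, so once both end spaces are shown to be contractible we obtain $|\III(H)|\simeq\Sigma|K|$.

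Contractibility of $|2^{(X)}|$ is immediate. For $|V|$, I would apply a nerve-theorem argument to the cover $\bigl\{\{y\}*\link_{\III(H)}(y)\bigr\}_{y\in Y}$: each piece is a cone, and for any finite $B_0\subseteq Y$ a direct calculation yields
\[
\bigcap_{y\in B_0}\bigl(\{y\}*\link_{\III(H)}(y)\bigr) \;=\; \{\sigma\in\III(H) : \sigma\cup B_0\in\III(H)\} \;=\; B_0 * \link_{\III(H)}(B_0),
\]
again a cone. Every finite intersection is therefore non-empty and contractible, so the nerve of the cover is the full simplex $2^{(Y)}$, which is contractible; the nerve theorem for good subcomplex covers then yields $|V|\simeq\ast$, completing the argument.

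The principal technical hurdle is the nerve-theorem step when $Y$ is infinite, as the cover $\bigl\{\{y\}*\link(y)\bigr\}_{y\in Y}$ need not be locally finite; this is handled by Bj\"orner's formulation for subcomplex covers, which requires only contractibility of finite intersections, not local finiteness. Minor care is also needed in the degenerate cases $X=\emptyset$ or $Y=\emptyset$, where the pushout argument can break down and a separate sanity check is required.
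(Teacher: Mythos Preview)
Your proposal is correct and gives a genuinely different decomposition from the paper's. The paper slices $|\III(H)|$ by the affine height function $f\big(\sum_x a_x e_x+\sum_y b_y e_y\big)=\sum_y b_y$, setting $A_1=f^{-1}([0,\tfrac12])$ and $A_2=f^{-1}([\tfrac12,1])$; both halves are contractible by straight-line homotopies, so $|\III(H)|\simeq\Sigma(A_1\cap A_2)$, and a second nerve argument (with a polyhedral cover of the level set $f^{-1}(\tfrac12)$) is then needed to identify $A_1\cap A_2\simeq K$. Your route replaces this geometric picture with the purely simplicial cover $\III(H)=2^{(X)}\cup V$, where the intersection is \emph{literally} $K$ as a subcomplex---no second comparison is required---and the single nerve step is relocated to proving $V$ contractible. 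Both proofs invoke the nerve lemma exactly once; yours trades the paper's immediate contractibility of $A_2$ for an immediate identification of the intersection with $K$, and stays entirely within the category of simplicial subcomplexes rather than passing through a polyhedral level set. The infinite-$Y$ issue you flag is real but, as you note, handled by the subcomplex version of the nerve theorem, which is also what the paper relies on.
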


Nagel and Reiner showed that for every finite simplicial complex $K$, there is a finite bipartite graph $G$ such that $\Sigma K$ and $\III(G)$ are homotopy equivalent \cite[Proposition~6.2]{NR} (see also \cite{Jonsson} and \cite{BarmakAdvances}). The following proof of Theorem~\ref{theorem Nagel--Reiner} is a slightly modified proof of it. Specifically, Nagel and Reiner proved their theorem under the assumption that $X$ and $Y$ are finite sets and that $\varphi(y_1) \not\subset \varphi(y_2)$ holds for any distinct $y_1, y_2 \in Y$. However, as we will see below, these assumptions are not substantial.

\begin{proof}[Proof of Theorem~\ref{theorem Nagel--Reiner}]
We first consider the case that one of $X$ and $Y$ is empty. In this case we have $K = \emptyset$ and $\Sigma K = S^0$. On the other hand, $H$ is the complete bipartite graph with bipartition $X$ and $Y$. Hence we have $\Sigma K = S^0 \simeq \III(H)$.

Next we assume that neither $X$ nor $Y$ is empty. Define the continuous function $f \colon |\III(H)| \to \RR$ by
\[ f \bigg( \sum_{x \in X} a_x e_x + \sum_{y \in Y} b_y e_y\bigg) = \sum_{y \in Y} b_y.\]
Set $A_1 = f^{-1}([0, 2^{-1}])$ and $A_2 = f^{-1}([2^{-1}, 1])$. By the straight-line homotopies, $A_1$ and $A_2$ are homotopy equivalent to $f^{-1}(0)$ and $f^{-1}(1)$, respectively. Since $X$ and $Y$ are independent in $H$, $f^{-1}(0)$ and $f^{-1}(1)$ are contractible. By the straight-line homotopy, $A_1 \cap A_2$ is a deformation retract of $f^{-1}((0,2^{-1}])$. This means that $(A_1, A_1 \cap A_2)$ is an NDR-pair (see \cite[Definition~7.4]{Kozlovbook}), and hence the inclusion $A_1 \cap A_2 \hookrightarrow A_1$ is a cofibration (see \cite[Proposition~7.7]{Kozlovbook}). Similarly, the inclusion $A_1 \cap A_2 \hookrightarrow A_2$ is a cofibration. Hence, we have $\III(H) \simeq \Sigma (A_1 \cap A_2)$ (see \cite[Lemma~6.3]{NR} for example).

Thus it suffices to show that $A_1 \cap A_2$ is homotopy equivalent to $K$. To see this, we construct good covers of $K$ and $A_1 \cap A_2$, and apply the nerve lemma. 

Consider the covering $\{ U_y \mid y \in Y\}$ of $|K|$ where $U_y = |\Delta^{\varphi(y)}|$. Here $\Delta^{\varphi(y)}$ is the simplicial complex whose ground set is $\varphi(y)$ ans the set of simplices are $2^{(\varphi(y))}$. Clearly, this covering is a good cover of $|K|$.

Next, consider the covering $\{ V_y \mid y \in Y\}$ of $A_1 \cap A_2$ given by
\[ V_{y} = \bigg\{ \sum_{x \in \varphi(y)} a_x e_x + \sum_{z \in Y} b_z e_z \in |\III(H)| \mid \sum_{x \in \varphi(y)} a_x = \sum_{z \in Y} b_z = \frac{1}{2} \bigg\}.\]
Here, we regard $A_1 \cap A_2$ as a polyhedral complex (see \cite[Definition~2.39]{Kozlovbook}) whose faces are the convex polytopes described as $|\sigma| \cap f^{-1}(2^{-1})$ for $\sigma \in \III(H)$. Then, $V_y$ is a subcomplex of $A_1 \cap A_2$. Indeed, $V_y$ is the union of faces $|\sigma \cup \tau| \cap f^{-1}(2^{-1})$, where $\sigma \subset \varphi(y)$ is a finite subset of $X$ and $\tau \subset Y$ is a finite subset of $Y$ such that $y' \in \tau$ implies $\sigma \subset \varphi(y')$.

Next, we show that $V_{y_1} \cap \cdots \cap V_{y_l}$ is either empty or contractible. Indeed, if $V_{y_1} \cap \cdots \cap V_{y_l} \ne \emptyset$, then there is $x \in \varphi(y_1) \cap \cdots \cap \varphi(y_l)$. Define the homotopy $h_t \colon V_{y_1} \cap \cdots \cap V_{y_l} \to V_{y_1} \cap \cdots \cap V_{y_l}$ for $t \in [0,1]$ by
\[ h_t(w) = (1-t)w + \frac{t}{2} e_x + \frac{t}{2} e_{y_1}.\]
Then, $h_1$ is the constant map at $2^{-1} x + 2^{-1} y_1$. Hence, $V_{y_1} \cap \cdots \cap V_{y_l}$ is contractible. Hence, $\{ V_y \mid y \in Y\}$ is a good cover of $A_1 \cap A_2$. Since 
\[ U_{y_1} \cap \cdots \cap U_{y_k} = \emptyset \ \Leftrightarrow \ \varphi(y_1) \cap \cdots \cap \varphi(y_k) = \emptyset \ \Leftrightarrow \ V_{y_1} \cap \cdots \cap V_{y_k} = \emptyset,\]
the nerve complexes of $\{ U_y\}$ and $\{ V_y\}$ are isomorphic. Hence, the nerve lemma \cite[Remark~15.22]{Kozlovbook} completes the proof.
\end{proof}

\begin{remark} \label{remark unclear}
In the proof of \cite[Proposition~6.2]{NR}, the description of the good cover of $A_1 \cap A_2$ is somewhat ambiguous. It seems that they considered the good cover of $A_1 \cap A_2$ as follows: For a finite subset $\{ y_1, \cdots, y_l\} = \tau \subset Y$ of $Y$ with $\varphi(y_1) \cap \cdots \cap \varphi(y_l) = \bigcap_{y \in \tau} \varphi(y) \ne \emptyset$, set
\[ V'_{\tau} = \bigg\{ \sum_{x \in \varphi(y_1) \cap \cdots \cap \varphi(y_l)} a_x e_x + \sum_{y \in \tau}^l b_y e_y \mid \sum_{x \in \varphi(y_1) \cap \cdots \cap \varphi(y_l)} a_x = \sum_{y \in \tau} b_y = \frac{1}{2}\bigg\}.\]
However, in this case, $\tau \subsetneq \tau'$ does not imply that $V'_{\tau'} \subset V'_\tau$, and the nerve lemma cannot be applied.
\end{remark}

Theorem~\ref{theorem A} is immediately deduced from Theorem~\ref{theorem Nagel--Reiner}.

\begin{proof}[Proof of Theorem~\ref{theorem A}]
We apply Theorem~\ref{theorem Nagel--Reiner} to the following case. Let $G = (V(G), E(G))$ be a graph. Set $X = Y = V(G)$, and define $\varphi \colon Y \to 2^X$ by $\varphi(y) = N_{\overline{G}}[y]$. Then, we have $K = \NNN[\overline{G}]$ and $H = K_2 \times G$. Hence, Theorem~\ref{theorem Nagel--Reiner} implies $\Sigma \NNN[\overline{G}] \simeq \III(K_2 \times G)$.

If $G$ is bipartite, then $K_2 \times G \cong G \sqcup G$, and thus we have $\Sigma \NNN[\overline{G}] \simeq \III(K_2 \times G) = \III(G) * \III(G)$.
This completes the proof.
\end{proof}

\begin{proof}[Proof of Corollary~\ref{corollary A}]
This immediately follows from Theorem~\ref{theorem A} and the fact that $\XXX(\overline{G})$ coincides with $\III(G)$.
\end{proof}

By Theorem~\ref{theorem A}, the homotopy type of the independence complex $\III(K_2 \times G)$ of the canonical double covering of $G$ is determined by the closed neighborhood complex $\NNN[\overline{G}]$ of the complement of $G$.

\begin{example}
We consider the homotopy type of $\III(K_2 \times K_n)$. Since $\overline{K_n}$ is the graph consisting of $n$ isolated vertices, it follows from Theorem~\ref{theorem A} that
\[ \III(K_2 \times K_n) \simeq \Sigma \NNN[\overline{K_n}] = \bigvee_{n-1} S^1. \]
This homotopy equivalence is a part of \cite[Proposition~3.2]{GSS}.
\end{example}

\begin{example} \label{example Borsuk graph}
Here, we determine the homotopy type of the independence complex of the canonical double covering of the Borsuk graphs $\Bor(S^1 ; a)$ and $\Bor[S^1; a]$ of a circle.

Here we regard $S^1$ as $\RR / \ZZ$. Let $d$ be the distance on $S^1$ induced by $\RR$, i.e.,
\[ d(\alpha, \beta) = \inf \{ |x - y| \mid x \in \alpha, y \in \beta \}.\]
Let $a$ be a positive number. The \emph{Borsuk graphs} $\Bor(S^1 ; a)$ and $\Bor[S^1; a]$ are the graphs defined by $V(\Bor(S^1; a)) = V(\Bor[S^1; a]) = S^1$ and
\[ E(\Bor(S^1 ; a)) = \{ (\alpha, \beta) \in S^1 \times S^1 \mid d(\alpha, \beta) > a\},\]
\[ E(\Bor[S^1 ; a]) = \{ (\alpha, \beta) \in S^1 \times S^1 \mid d(\alpha, \beta) \ge a\}.\]
The Borsuk graphs of a circle is used to define the circular chromatic number. The circular chromatic number $\chi_c(G)$ is a refinement of the chromatic number $\chi(G)$, and we refer the reader to \cite{Zhu1, Zhu2}. Regarding the Borsuk graphs of other metric spaces, see \cite{AJM} and references therein.

Note that the independence complexes of $\Bor (S^1 ; a)$ and $\Bor[S^1 ; a]$ coincide with certain Vietoris--Rips complexes of a circle, and hence their homotopy types were determined in \cite[Main result]{AA}.

Now, we determine the homotopy types of the independence complexes of $K_2 \times \Bor(S^1; a)$ and $K_2 \times \Bor[S^1 ; a]$. The closed neighborhood complex of the complement of $\Bor(S^1 ; a)$ (or $\Bor [S^1 ; a]$) is the \v{C}ech complex $\CCC[S^1 ; \frac{1}{2} - a]$ (or $\CCC(S^1 ; \frac{1}{2} - a)$, respectively). For the definitions of $\CCC(X,d)$ and $\CCC[X,d]$ of a metric space $(X,d)$, see Section~\ref{section introduction}. The homotopy types of $\CCC(S^1; r)$ and $\CCC[S^1 ; r]$ are determined by \cite[Main result]{AA}:
\[ \CCC(S^1; r) \simeq S^{2l + 1} \quad \left( \frac{l}{2(l+1)} < r \le \frac{l+1}{2(l+2)}\right),\]
\[ \CCC[S^1 ; r] \simeq \begin{cases}
S^{2l + 1} & \left( \frac{l}{2(l+1)} < r < \frac{l+1}{2(l+2)} \right) \\
\bigvee_\mathfrak{c} S^{2l} & r = \frac{l}{2(l+1)}
\end{cases}\]
Here, $\mathfrak{c}$ means the cardinality of the continuum. Thus, Theorem~\ref{theorem A} implies
\[ \III(K_2 \times \Bor(S^1 ; a)) \simeq \Sigma \CCC \Big[S^1 ; \frac{1}{2} - a \Big] \simeq \begin{cases}
S^{2l + 2} & \frac{l}{2(l+1)} < \frac{1}{2} - a < \frac{l + 1}{2(l+2)} \\
\bigvee_{\mathfrak{c}} S^{2l + 1} & \frac{1}{2} - a = \frac{l}{2(l+1)},
\end{cases}\]
\[ \III(K_2 \times \Bor[S^1 ; a]) \simeq \Sigma \CCC \Big(S^1; \frac{1}{2} - a \Big) \simeq S^{2l + 2} \quad \left( \frac{l}{2(l+1)} < \frac{1}{2} - a \le \frac{l+1}{2(l+2)}\right).\]
\end{example}

In the above examples, we determine the homotopy type of $\III(K_2 \times G)$ from the homotopy type of $\NNN[\overline{G}]$. Conversely, in the following example we determine the homotopy type of the closed neighborhood complex $\NNN[G]$ from the homotopy type of $\III(K_2 \times \overline{G})$.

\begin{example} \label{example cartesian}
Goyal--Shukla--Singh \cite{GSS} conjectured that the independence complex of $K_2 \times K_3 \times K_n$ is homotopy equivalent to a wedge of $(n-1)(3n-2)$ spheres of dimension $3$. Antolín Camarena and Carnero Bravo \cite{BC} resolved this conjecture by showing $\III(K_2 \times K_m \times K_n) \simeq \bigvee_{f(m,n)} S^3$, where
\[ f(m,n) = \frac{(n-1)(m-1)(mn-2)}{2}.\]
Note that the complement $\overline{K_m \times K_n}$ of $K_m \times K_n$ coincides with the cartesian product $K_m \square K_n$. By Theorem~\ref{theorem A}, we have
\[ \III(K_2 \times K_m \times K_n) \simeq \Sigma \NNN[K_m \square K_n].\]
Therefore we have
\[ \tilde{H}_k(\NNN[K_m \square K_n] ; \ZZ) \cong \begin{cases}
\ZZ^{f(m,n)} & (k = 2) \\
0 & \text{(otherwise).}
\end{cases}\]
Indeed, it is not difficult to see that $\NNN[K_m \square K_n]$ is simply-connected (see Example~\ref{example edge-path}). Hence, it follows from \cite[Example~4.34]{Hatcher} that
\[ \NNN[K_m \square K_n] \simeq \bigvee_{f(m,n)} S^2.\]
\end{example}

\section{Independence complex of neighborhood hypergraph} \label{section neighborhood hypergraph}

In this section, we show Theorem~\ref{theorem neighborhood hypergraph}, which states that for a finite graph $G$ the closed neighborhood complex $\NNN[\overline{G}]$ of the complement of $G$ coincides with the combinatorial Alexander dual $\III(\NNN_G)^\vee$ of the neighborhood hypergraph $\NNN_G$ of $G$. We also discuss the independence complex of the neighborhood hypergraph of a forest (Example~\ref{example forest}). Throughout this section, every graph is assumed to be finite.

\begin{definition} \label{definition hypergraph}
A \emph{hypergraph $H$} is a pair $(V(H), E(H))$ consisting of a finite set $V(H)$ and a family $E(H) \subset 2^{V(H)}$ of subsets of $V(H)$. A subset $\sigma$ of $V(H)$ is said to be \emph{independent} if there is no $e \in E(H)$ such that $e \subset \sigma$.
\end{definition}

\begin{definition}[see \cite{MW, Taylan} for example] \label{definition hypergraph 2}
The \emph{independence complex $\III(H)$ of the hypergraph $H$} is the simplicial complex whose ground set is $V(H)$ and whose set of simplices is the set of independent sets in $H$.
\end{definition}

In the previous work \cite{MW}, the author and Wakatsuki showed that there is a close relationship between the neighborhood complex $\NNN(\overline{G})$ of the complement of $G$ and the dominance complex $\DDD(G)$. Recall that the dominance complex is the simplicial complex defined as follows: The ground set of $\DDD(G)$ is $V(G)$ and $\sigma \subset V(G)$ is a simplex of $\DDD(G)$ if and only if $V(G) - \sigma$ is a dominating set. Here, $\tau \subset V(G)$ is said to be \emph{dominating in $G$} if for every $x \in V(G) - \tau$ there is $y \in \tau$ that is adjacent to $x$.

Indeed, it is easy to see that $\DDD(G)$ is the independence complex of the following hypergraph $\DDD_G$: the vertex set of $\DDD_G$ is $V(G)$ and the set $E(\DDD_G)$ of hyperedges is $\{ N_G[v] \mid v \in V(G)\}$. Then, the author and Wakatsuki showed the following theorem:

\begin{theorem}[{\cite[Theorem~1.1]{MW}}] \label{theorem previous}
Let $G$ be a finite graph. Then, $\DDD(G)^\vee$ coincides with $\NNN(\overline{G})$.
\end{theorem}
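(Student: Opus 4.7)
The proof is a direct unravelling of the definitions, and the plan is to reduce the statement to the identity $V(G) \setminus N_G(v) = N_{\overline{G}}[v]$ for every $v \in V(G)$.

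First I will spell out the two sides of the claimed equality. A finite subset $\sigma \subset V(G)$ belongs to $\III(\NNN_G)^\vee$ precisely when $V(G) \setminus \sigma$ is \emph{not} independent in the neighborhood hypergraph $\NNN_G$, i.e., when there exists a vertex $v \in V(G)$ with
\[ N_G(v) \subset V(G) \setminus \sigma. \]
This is the same as saying $N_G(v) \cap \sigma = \emptyset$, i.e., $\sigma \subset V(G) \setminus N_G(v)$. On the other hand, $\sigma$ belongs to $\NNN[\overline{G}]$ precisely when there exists $v \in V(G)$ such that $\sigma \subset N_{\overline{G}}[v]$.

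Next I will verify the key set-theoretic identity. Since $G$ is a (loopless) simple graph we have $v \notin N_G(v)$, hence
\[ V(G) \setminus N_G(v) = \{v\} \cup \bigl(V(G) \setminus N_G[v]\bigr) = \{v\} \cup N_{\overline{G}}(v) = N_{\overline{G}}[v], \]
using that $w \in N_{\overline{G}}(v)$ iff $w \ne v$ and $w \notin N_G(v)$. With this identity in hand, the two conditions derived in the previous paragraph for the same vertex $v$ are literally the same. Therefore $\sigma \in \III(\NNN_G)^\vee$ if and only if $\sigma \in \NNN[\overline{G}]$, proving the theorem.

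There is no genuine obstacle here; the only point that needs a moment of care is that the combinatorial Alexander dual is defined via open neighborhoods of $G$ (the hyperedges of $\NNN_G$), whereas the closed neighborhood complex is defined via closed neighborhoods of $\overline{G}$, and the shift between these two descriptions is exactly the equality $V(G) \setminus N_G(v) = N_{\overline{G}}[v]$ valid for loopless simple graphs. I should also briefly mention that the finiteness hypothesis on $G$ is used to ensure that the combinatorial Alexander dual is defined (since its definition requires a finite ground set).
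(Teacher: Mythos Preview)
Your argument is internally correct, but it proves the wrong statement. The theorem in question concerns the \emph{dominance complex} $\DDD(G)$ and the \emph{open} neighborhood complex $\NNN(\overline{G})$; what you have written is instead a proof that $\III(\NNN_G)^\vee = \NNN[\overline{G}]$, which is Theorem~\ref{theorem neighborhood hypergraph}. Recall that $\DDD(G)$ is the independence complex of the hypergraph $\DDD_G$ whose hyperedges are the \emph{closed} neighborhoods $N_G[v]$, not the open ones $N_G(v)$. Hence $\sigma \in \DDD(G)^\vee$ iff there exists $v$ with $N_G[v] \subset V(G)\setminus\sigma$, i.e.\ $\sigma \subset V(G)\setminus N_G[v]$. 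The relevant set identity is therefore $V(G)\setminus N_G[v] = N_{\overline{G}}(v)$ (open neighborhood in the complement), not $V(G)\setminus N_G(v) = N_{\overline{G}}[v]$. With that single swap your argument goes through verbatim and yields $\DDD(G)^\vee = \NNN(\overline{G})$ as required.

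The paper itself does not supply a proof of this theorem (it is quoted from \cite{MW}); however, it proves the parallel Theorem~\ref{theorem neighborhood hypergraph} by exactly the chain-of-equivalences method you used. So your overall strategy is the right one---you simply applied it to the wrong member of the pair of analogous statements.
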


We consider an analogue of the above theorem for the closed neighborhood complex. This leads to a close relationship between the closed neighborhood complex of the complement of a graph and the independence complex of the neighborhood hypergraph:

\begin{definition}[see \cite{BGZ} for example] \label{definition neighborhood hypergraph}
Let $G$ be a finite graph. The \emph{neighborhood hypergraph $\NNN_G$} is defined by $V(\NNN_G) = V(G)$ and $E(\NNN_G) = \{ N_G(v) \mid v \in V(G)\}$.
\end{definition}

The neighborhood hypergraph has been studied in the following realization problem: Given a hypergraph $H$, determine whether there is a graph $G$ such that the neighborhood hypergraph $\mathcal{N}_G$ is isomorphic to $H$.  This problem, originally suggested by Sós \cite{Sos}, was observed by Babai \cite{Babai} to be at least as hard as the graph Isomorphism Problem. Later, Lalonde \cite{Lalonde1, Lalonde2} proved that this realization problem is NP-hard. For further references, we refer the reader to \cite{BGZ}.

Thus, Theorem~\ref{theorem neighborhood hypergraph} is an analogue of Theorem~\ref{theorem previous} in the case of the closed neighborhood complex.

\begin{proof}[Proof of Theorem~\ref{theorem neighborhood hypergraph}]
Let $\sigma$ be a subset of $V(G)$. Consider the following conditions on $\sigma$:
\begin{enumerate}[(1)]
\item $\sigma$ is a simplex of $\III(\NNN_G)^\vee$.

\item $V(G) \setminus \sigma$ is a non-face of $\III(\NNN_G)$.

\item There is $v \in V(G)$ such that $N_G(v)$ is contained in $V(G) \setminus \sigma$.

\item There is $v \in V(G)$ such that $\sigma$ is contained in $N_{\overline{G}}[v] = V(G) \setminus N_G(v)$.

\item $\sigma$ is a simplex of $\NNN[\overline{G}]$.
\end{enumerate}
For $i = 1, \cdots, 4$, the equivalence $(i) \Leftrightarrow (i+1)$ clearly holds. This completes the proof.
\end{proof}

\begin{example} \label{example forest}
Here we show that for a forest $F$, $\III(\NNN_F)$ is contractible or homotopy equivalent to a sphere.

Let $F$ be a forest. Suppose that $F$ has a connected component $F_0$ having at least $3$ vertices. Let $v \in F_0$ be a leaf. Then, there is $w \in V(F_0)$ such that $N_F(v) \subset N_F(w)$. Then, the deletion of $w$ in $\III(\NNN_F)$ is a cone with apex $v$, and the link of $w$ coincides with $\III(\NNN_{F - w})$. Hence, we have $\III(\NNN_F) \simeq \Sigma \III(\NNN_{F - w})$.

Repeating this procedure, we have that $\III(\NNN_F)$ is homotopy equivalent to a repeated suspension of $\III(\NNN_{F'})$, where every connected component of $F'$ is isomorphic to $K_2$ or $K_1$. If $F'$ contains an isolated vertex $v$, then $\III(\NNN_{F'})$ is a cone with apex $v$, and in this case $\III(\NNN_{F'})$ is contractible.

On the other hand, suppose that every connected component of $F'$ is isomorphic to $K_2$. In this case, $\III(\NNN_{F'})$ is the empty space. Since a repeated suspension of the empty space is a sphere, we conclude that $\III(\NNN_F)$ is homotopy equivalent to a sphere $S^m$ for some $m$. We can show that this integer $m$ is $\nu(F) - 2 \gamma(F) - 1$, where $\nu(F)$ is the number of vertices of $F$ and $\gamma(F)$ is the domination number of $F$. Recall that the \emph{domination number $\gamma(G)$ of a graph $G$} is the minimum cardinality of dominating sets in $G$. Indeed, this calculation $m = \nu(F) - 2 \gamma(F) - 1$ can be directly proved by induction on the number of vertices, but here we relate this to the independence complex $\III(F)$ of $F$. By Theorems~\ref{theorem A} and \ref{theorem neighborhood hypergraph}, we have
\[ \Sigma \big( \III(\NNN_F)^\vee \big) = \Sigma \NNN[\overline{F}] \simeq \III(K_2 \times F) \cong \III(F) * \III(F).\]
It is known that the independence complex $\III(F)$ of a forest $F$ is contractible or homotopy equivalent to $S^{\gamma(F) - 1}$ (see \cite{MT}). Since in this case $\Sigma (\III(\NNN_F)^\vee)$ is not contractible, we have $\Sigma \big( \III(\NNN_F)^\vee \big) \simeq S^{2 \gamma(F) - 1}$. Thus the Alexander duality theorem (Theorem~\ref{theorem Alexander}) shows
\[ \tilde{H}_k(\III(\NNN_F); \ZZ) \cong \begin{cases}
\ZZ & (k = \nu(F) - 2\gamma(F) - 1) \\
0 & \text{(otherwise)}
\end{cases}\]
Since we have already seen that $\III(\NNN_F)$ is homotopy equivalent to a sphere, we conclude
\[\III(\NNN_F) \simeq S^{\nu(F) - 2\gamma(F) - 1}.\]
\end{example}

\section{Fundamental groups of closed neighborhood complexes} \label{section fundamental}

The goal of this section is to prove Theorem~\ref{theorem B}, which states that the fundamental group of the $k$-neighborhood complex $(\NNN^k[G], v)$ is naturally isomorphic to the closed $2k$-fundamental group $\pi_1^{2k}[G,v]$ introduced by Di--Ivanov--Mukoseev--Zhang \cite{DIMZ}. In Subsection~\ref{subsection graph maps}, we discuss the morphisms in our category of graphs. In Subsection~\ref{subsection KY}, we recall the combinatorial description of the closed $k$-fundamental group provided by Kishimoto--Tong \cite{KT}. In Subsection~\ref{subsection another formulation}, we give another formulation of the closed $k$-fundamental group for graphs. In Subsection~\ref{subsection edge-path}, we recall the edge-path group used in the proof of Theorem~\ref{theorem B}. In Subsection~\ref{subsection proof b}, we prove Theorem~\ref{theorem B}.

\subsection{Graph maps} \label{subsection graph maps}

In this subsection, we describe our category of graphs to explain the meaning of the naturality condition in Theorem~\ref{theorem B}. Recall that a function $f \colon V(G) \to V(H)$ is called a \emph{homomorphism} if $(x,y) \in E(G)$ implies $(f(x), f(y)) \in E(H)$. In many cases, the class of homomorphisms is employed as the class of morphisms in the category of graphs (see \cite{HN} for example).

Note that a function $f \colon V(G) \to V(H)$ is a homomorphism if and only if $f(N_G(v)) \subset N_H(f(v))$ for every $v \in V(G)$. Thus, from the perspective of the closed neighborhood, it is also natural to consider the following notion as a morphism. In fact, such a notion is sometimes adopted as the morphisms of the category of graphs (see \cite[Definition~2.2]{KT} for example).

\begin{definition} \label{definition graph map}
Let $G$ and $H$ be graphs. A \emph{map} or a \emph{graph map from $G$ to $H$} is a function $f \colon V(G) \to V(H)$ such that $v \in V(G)$ implies $f(N_G[v]) \subset N_H[f(v)]$. In other words, whenever $(x,y) \in E(G)$, $f(y)$ is adjacent to $f(x)$ or coincides with $f(x)$.

A \emph{based graph} is a pair $(G,v)$ consisting of a graph $G$ and a vertex $v$. A graph map $f \colon G \to H$ between based graphs $(G,v)$ and $(H,w)$ is said to be \emph{basepoint preserving}, or simply \emph{based} if $f(v) = w$.
\end{definition}

Note that every graph map is a homomorphism. Unless otherwise stated, we consider graph maps as the morphisms in the category of graphs in this paper.

The following lemma shows that the closed $k$-neighborhood complex $\NNN^k[G]$ is functorial with respect to graph maps.

\begin{lemma}
Let $f \colon G \to H$ be a graph map, and $k$ a positive integer. Then, the following hold:
\begin{enumerate}[(1)]
\item For $v \in V(G)$, $f(N^k_G[v]) \subset N^k_H[f(v)]$ holds.

\item The map $f \colon V(G) \to V(H)$ is a simplicial map from $\NNN^k[G]$ to $\NNN^k[H]$.
\end{enumerate}
\end{lemma}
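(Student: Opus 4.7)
The plan is to prove (1) by induction on $k$, and then deduce (2) as a straightforward consequence.

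For the base case $k = 1$ of part (1), the inclusion
\[ f(N_G^1[v]) = f(N_G[v]) \subset N_H[f(v)] = N_H^1[f(v)] \]
is precisely the defining property of a graph map (Definition~\ref{definition graph map}). For the inductive step, assume the statement holds at level $k$, and take $u \in N_G^{k+1}[v] = \bigcup_{w \in N_G^k[v]} N_G[w]$. Then $u \in N_G[w]$ for some $w \in N_G^k[v]$. Applying the base case at the vertex $w$ yields $f(u) \in f(N_G[w]) \subset N_H[f(w)]$, while the inductive hypothesis applied at $v$ gives $f(w) \in f(N_G^k[v]) \subset N_H^k[f(v)]$. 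Combining these,
\[ f(u) \in N_H[f(w)] \subset \bigcup_{w' \in N_H^k[f(v)]} N_H[w'] = N_H^{k+1}[f(v)], \]
which completes the induction.

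Part (2) then follows immediately. If $\sigma$ is a simplex of $\NNN^k[G]$, there exists $v \in V(G)$ with $\sigma \subset N_G^k[v]$, and (1) gives
\[ f(\sigma) \subset f(N_G^k[v]) \subset N_H^k[f(v)]. \]
Since $f(\sigma)$ is finite and contained in a single closed $k$-neighborhood of $H$, it is a simplex of $\NNN^k[H]$; thus $f$ is simplicial.

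The argument involves no essential obstacle; the only point worth being careful about is that the inductive step must invoke the graph-map hypothesis at the \emph{intermediate} vertex $w \in N_G^k[v]$ rather than at $v$ itself, so that the base case and the inductive hypothesis can be chained together in the correct order.
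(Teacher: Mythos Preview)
Your proof is correct and follows essentially the same approach as the paper's: induct on $k$, using the graph-map condition for the base case and combining it with the inductive hypothesis in the step. The only cosmetic difference is that the paper phrases the inductive step as a single chain of set inclusions, whereas you argue element-wise; the logical content is identical.
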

\begin{proof}
By definition, we have $f(N^1_G[v]) \subset N^1_H[f(v)]$. Then, by induction, we have
\[ f(N_G^{i+1}[v]) = \bigcup_{w \in N^i_G[v]} f(N_G[w]) \subset \bigcup_{w \in N^i_G[v]} N_H[f(w)] \subset \bigcup_{y \in N^i_H[f(v)]} N_H[y] = N_H^{i+1}[f(v)].\]
This completes the proof of (1). It is clear that (2) follows from (1).
\end{proof}

\subsection{Combinatorial description by Kishimoto--Tong} \label{subsection KY}

In this subsection we recall the combinatorial description of the closed $k$-fundamental group $\pi_1^k[X,x_0]$ for a based digraph $(X, x_0)$ by Kishimoto--Tong \cite{KT}.

Let $X$ and $Y$ be digraphs. A map $f \colon V(X) \to V(Y)$ is called a \emph{digraph map} if $(x,y) \in E(X)$ implies $(f(x), f(y)) \in E(Y)$ or $f(x) = f(y)$. Let $\JJJ_0$ be the set consisting of the digraph with a single vertex $0$. For a positive integer $n$, let $\JJJ_n$ denote the set of digraphs with vertex set $\{0,1,\cdots, n\}$ having exactly one edge $(i,i+1)$ or $(i+1,i)$ for each $i = 0,1,\cdots, n-1$ and no other edges.

Let $\vec{I}_n$ be the digraph such that $\vec{I}_n \in \JJJ_n$ and
\[ E(\vec{I}_n) = \{ (i, i+1) \mid i = 0,1,\cdots, n-1\}.\]

A \emph{path of length $n$} in a digraph $X$ is a digraph map $I_n \to X$ for some $I_n \in \JJJ_n$ with $n \ge 0$. A directed path in a digraph $X$ is a path $\vec{I}_n \to X$.

For $k \ge 1$, define the digraph $\Gamma_k$ by
\[ V(\Gamma_k) = \{ u_0 = v_0, u_1, v_1, u_2, v_2, \cdots, u_{k-1}, v_{k-1}, u_k = v_k\},\]
\[ E(\Gamma_k) = \{ (u_i, u_{i+1}), (v_i, v_{i+1}) \mid i = 0,1, \cdots, k-1\}.\]
Figure~\ref{figure gamma} depicts $\Gamma_k$ for $k \ge 2$. Let $x$ be a vertex of $\Gamma_k$. For $k = 0$, let $\rho_x$ be the constant path $\rho_x \colon I \to \Gamma_0$ with $I \in \JJJ_1$. For $r = 1$, let $\rho_x \colon I_2 \to \Gamma_1$ with $I_2 \in \JJJ_2$ be the unique non-constant path such that $\rho_x(0) = x = \rho_x(2)$. For $k \ge 2$, let $\rho_x \colon I_{2k} \to \Gamma_k$ with $I_{2k} \in \JJJ_{2k}$ be the reduced path with $\rho_x(0) = x = \rho_x(2k)$ that is clockwise with respect to Figure~\ref{figure gamma}.

For $I_m \in \JJJ_m$ and $I_n \in \JJJ_n$, the concatenation $I_m + I_n \in \JJJ_{m+n}$ is defined in an obvious way. Then, for a pair of paths $f \colon I_m \to X$ and $g \colon I_n \to X$ with $f(m) = g(0)$, define the \emph{concatenation $f \cdot g \colon I_m + I_n \to X$ of $f$ and $g$} by
\[ f \cdot g(i) = \begin{cases}
f(i) & (0 \le i \le m) \\
g(i - m) & (m \le i \le m + n).
\end{cases}\]

A \emph{based digraph} is a pair $(X,x_0)$ consisting of a digraph $X$ and a vertex $x_0$ of $X$. For a based digraph $(X,x_0)$, let $\vec{\Omega}(X,x_0)$ be the set of paths of $X$ whose initial and terminal points are $x_0$. For $f,g \in \vec{\Omega}(X,x_0)$, we write $f \to_k g$ if one of the following conditions holds:
\begin{enumerate}[(1)]
\item $f = g$.

\item There are decompositions $f = f_1 \cdot f_2$ and $g = f_1 \cdot (h \circ \rho_x) \cdot f_2$, where $f_1$ and $f_2$ are paths in $X$ and $h \colon \Gamma_s \to X$ is a digraph map with $s = 0$ or $k$,  $x$ is a vertex of $\Gamma_k$ such that $h(x)$ is the terminal point of $f_1$.
\end{enumerate}

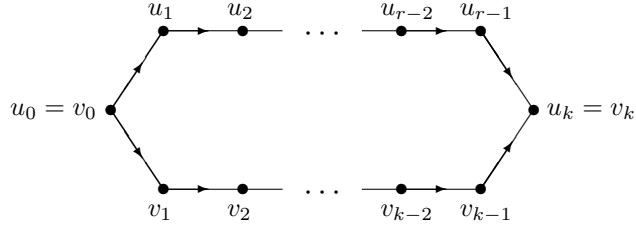
\begin{figure}
\centering
\begin{picture}(150,80)(0,-8)
\put(0,30){\circle*{4}}
\put(0,30){\line(2,3){20}}
\put(0,30){\line(2,-3){20}}

\put(0,30){\vector(2,3){12}}
\put(0,30){\vector(2,-3){12}}

\put(20,0){\vector(1,0){17}}
\put(20,60){\vector(1,0){17}}

\put(20,0){\circle*{4}}
\put(50,0){\circle*{4}}
\put(110,0){\circle*{4}}
\put(140,0){\circle*{4}}

\put(20,0){\line(1,0){45}}
\put(20,60){\line(1,0){45}}
\put(140,0){\line(-1,0){45}}
\put(140,60){\line(-1,0){45}}

\put(73,-4){$\cdots$}
\put(73,56){$\cdots$}

\put(20,60){\circle*{4}}
\put(50,60){\circle*{4}}
\put(110,60){\circle*{4}}
\put(140,60){\circle*{4}}

\put(160,30){\circle*{4}}
\put(160,30){\line(-2,3){20}}
\put(160,30){\line(-2,-3){20}}

\put(140,0){\vector(2,3){12}}
\put(140,60){\vector(2,-3){12}}

\put(110,0){\vector(1,0){17}}
\put(110,60){\vector(1,0){17}}

\put(-38,28){\footnotesize $u_0 = v_0$}

\put(14,66){\footnotesize $u_1$}
\put(44,66){\footnotesize $u_2$}

\put(102,66){\footnotesize $u_{r-2}$}
\put(132,66){\footnotesize $u_{r-1}$}

\put(14,-10){\footnotesize $v_1$}
\put(44,-10){\footnotesize $v_2$}

\put(102,-10){\footnotesize $v_{k-2}$}
\put(132,-10){\footnotesize $v_{k-1}$}

\put(165,28){\footnotesize $u_k = v_k$}

\end{picture}
\caption{Digraph $\Gamma_k$}
\label{figure gamma}
\end{figure}

Let $\approx_k$ be the smallest equivalence relation containing the relation $f \to_k g$. Then, it is straightforward to see that the quotient set $\vec{\pi}_1^k(X,x_0) = \vec{\Omega}(X, x_0) / \approx_k$ is a group, whose group operation is the map induced by the concatenation of paths. This is the formulation of the closed $k$-fundamental group of the based digraph $(X,x_0)$.

The following lemma shows that if $G$ is a graph, then the equivalence class of $\approx_k$ does not depend on the choice of the directions of paths:

\begin{lemma} \label{lemma direction}
Let $(G,v)$ be a based graph. Let $f$, $g \in \JJJ_n$ such that $f(i) = g(i)$ for every $i = 0,1, \cdots, n$. Then, $f \approx_k g$ for $k \ge 2$.
\end{lemma}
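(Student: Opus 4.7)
The plan is to reduce to the case where $f$ and $g$ differ in the orientation of exactly one $I_n$-edge (by transitivity of $\approx_k$ and induction on the number of differing positions), and then to execute that one swap by a direct construction in $\Gamma_k$. After the reduction, write $f = f_1 \cdot e_f \cdot f_2$ and $g = f_1 \cdot e_g \cdot f_2$, where $e_f$ and $e_g$ are length-$1$ paths from $a := f(j)$ to $b := f(j+1)$ with opposite $I_1$-edge orientations. If $a = b$, both are constant length-$1$ paths at $a$, and the swap follows at once from the $s = 0$ case of $\to_k$: insert a copy of $e_g$ adjacent to $e_f$, then delete $e_f$.

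The substantive case is $a \neq b$, where both $(a, b)$ and $(b, a)$ lie in $E(G)$ because $G$ is a graph. The heart of the argument is the following sub-claim: for $k \ge 2$ and any length-$1$ paths $p \colon a \to b$ and $q \colon b \to a$ (in any combination of $I_1$-edge orientations), the length-$2$ loop $p \cdot q$ based at $a$ is $\approx_k$-equivalent to the constant loop at $a$, which I will denote $c_a$. To prove this sub-claim, I would construct, for each of the four orientation combinations, a digraph map $h \colon \Gamma_k \to G$ with image contained in $\{a, b\}$ and a vertex $x \in V(\Gamma_k)$ such that $h \circ \rho_x$ (a loop of length $2k$) consists of exactly two non-constant length-$1$ sub-paths equal to $p$ and $q$, together with $2k - 2$ constant length-$1$ sub-paths. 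The choice of $x$ is dictated by which $I_{2k}$-edges of $\rho_x$ are forward versus reversed: for instance, when both $p$ and $q$ use forward $I_1$-edges, take $x = u_0$ with $h(u_0) = a$, $h(u_1) = b$, $h(u_2) = a$, and every remaining vertex of $\Gamma_k$ sent to $a$; when $p$ is reversed and $q$ is forward, take $x = u_k$ and collapse analogously, so that the two non-constant steps appear at positions $1$ and $k+1$ of $\rho_{u_k}$. After inserting $h \circ \rho_x$ into $c_a$ via $\to_k$ and deleting the $2k - 2$ constants via $\to_0$, one obtains $c_a \approx_k p \cdot q$.

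Given the sub-claim, the edge-direction swap is completed by the chain
\[ f = f_1 \cdot e_f \cdot f_2 \approx_k f_1 \cdot e_f \cdot (e_g^{-1} \cdot e_g) \cdot f_2 = f_1 \cdot (e_f \cdot e_g^{-1}) \cdot e_g \cdot f_2 \approx_k f_1 \cdot c_a \cdot e_g \cdot f_2 \approx_k g, \]
where the first step inserts the $\approx_k$-trivial length-$2$ loop $e_g^{-1} \cdot e_g$ at $b$, the second replaces the $\approx_k$-trivial length-$2$ loop $e_f \cdot e_g^{-1}$ by $c_a$ (each via the sub-claim, applied at $b$ and $a$ respectively), and the third deletes $c_a$ via $\to_0$. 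Both substitutions are justified by the locality of $\to_k$: a chain of $\to_k$ operations transforming one sub-loop into an $\approx_k$-equivalent one extends to a chain on the ambient loop.

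The main obstacle is the combinatorial case analysis in the sub-claim: for each of the four orientation combinations of $(p, q)$, one must exhibit a digraph map $h \colon \Gamma_k \to G$ and a base vertex $x$ realizing the desired $h \circ \rho_x$ up to constants. The constructions are routine once the pattern is identified, but require care to verify that the chosen collapses are consistent with the digraph-map axiom, which they are precisely because both $(a, b)$ and $(b, a)$ lie in $E(G)$.
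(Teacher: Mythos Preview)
Your argument is correct and takes a genuinely different route from the paper's. Both proofs reduce (by transitivity) to swapping the orientation of a single domain edge. From there, the paper first invokes \cite[Lemma~3.7]{KT} to reduce to the case $k=2$, and then writes down an explicit short chain of $\approx_2$-moves carrying $x_{i-1}\to x_i$ to $x_{i-1}\leftarrow x_i$. You instead work directly at arbitrary $k\ge 2$: you isolate the sub-claim that every length-$2$ back-and-forth loop $p\cdot q$ (in each of the four orientation patterns) is $\approx_k$-trivial, realize each pattern via a suitable digraph map $h\colon \Gamma_k\to\{a,b\}\subset G$ together with a choice of base vertex $x$, strip the $2k-2$ constant steps by $\Gamma_0$-moves, and then finish with the cancellation chain $e_f \rightsquigarrow e_f\cdot e_g^{-1}\cdot e_g \rightsquigarrow e_g$ (using locality of $\to_k$ to substitute within the ambient loop). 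Your approach is self-contained---it avoids the external citation entirely---at the price of the four-case check for the sub-claim; the paper's approach is shorter but leans on that cited lemma.
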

\begin{proof}
Since $f \approx_2 g$ implies $f \approx_k g$ (see \cite[Lemma~3.7]{KT}), it suffices to see that $f \approx_2 g$.

Let $I_f$ and $I_g$ be the domains of $f$ and $g$, respectively. Set $x_i = f(i) = g(i)$. We can assume that the directions of the edges of $I_f$ and $I_g$ only differ  between $i-1$ and $i$, and that $(i-1) \to i$ in $I_f$. Then, we have
\begin{eqnarray*}
f &=& x_0\cdots x_{i-1}\to x_i \cdots x_n\\
& \approx_2 & x_0 \cdots x_{i-1} \to x_i \to x_{i-1} \leftarrow x_i \cdots x_n \\
& \approx_2 & x_0 \cdots x_{i-1} \to x_i \to x_{i-1} \leftarrow x_{i-1} \leftarrow x_{i-1} \leftarrow x_i \cdots x_n \\
& \approx_2 & x_0 \cdots x_{i-1} \leftarrow x_i \cdots x_n = g.
\end{eqnarray*}
This completes the proof.
\end{proof}

\subsection{Another formulation} \label{subsection another formulation}

In this subsection, we provide another formulation of the closed $k$-fundamental group $\pi_1^k[G,v]$ of a based graph $(G,v)$ for $k \ge 2$, which is slightly different from the formulation by Kishimoto--Tong.

Let $G$ be a graph. Let $P_n$ be the path graph consisting of $n + 1$ vertices, i.e., $V(P_n) = \{ 0,1, \cdots, n\}$ and $E(P_n) = \{ (x,y) \in V(P_n)^2 \mid |x - y| = 1\}$. A \emph{path in $G$ of length $n$} is a graph map $\gamma \colon P_n \to G$. If $\gamma(i) = x_i$ for $i \in \{0,1, \cdots, n\}$, then we denote the path $\gamma$ by $(x_0, \cdots, x_n)$. Let $\Omega[G, v, w]$ denote the set of paths in $G$ joining $v$ to $w$. For a path $\gamma$, we denote the length of $\gamma$ by $l(\gamma)$. Note that for a pair of vertices $v$ and $w$ of $G$, $w \in N^k_G[v]$ if and only if there is a path joining $v$ to $w$ of length $k$.

For a pair of paths $\gamma \colon P_m \to G$ and $\delta \colon P_n \to G$ with $\gamma(m) = \delta(0)$, the \emph{concatenation of $\gamma$ and $\delta$} is the path $\gamma \cdot \delta \colon P_{m + n} \to G$ defined by
\[ \gamma \cdot \delta(i) = \begin{cases}
\gamma(i) & (i \le m)\\
\delta(i - m) & (i \ge m).
\end{cases}\]
For a path $\gamma \colon P_n \to G$, define the \emph{reverse $\bar{\gamma} \colon P_n \to G$ of $\gamma$} by $\bar{\gamma}(i) = \gamma(n-i)$.

Let $k$ be an integer greater than $1$. Let $\simeq_k$ be the smallest equivalence relation on $\Omega [G, v, w]$ containing the following two relations:

\begin{enumerate}
\item[(A)] $l(\gamma') = l(\gamma) + 1$ and there is $x \in \{ 0,1,\cdots, l(\gamma)\}$ such that $i \le x$ implies $\gamma(i) = \gamma'(i)$ and $i \ge x$ implies $\gamma(i) = \gamma'(i+1)$.

\item[(B)$_k$] $l(\gamma) = l(\gamma')$ and there is $i_0 \in \{ 0,1, \cdots, l(\gamma)\}$ such that $\gamma(i) = \gamma'(i)$ for $i \le i_0$ and for $i \ge i_0 + k$.
\end{enumerate}
Define $\pi_1^k[G,v,w]$ to be the quotient set $\Omega[G, v, w] / \simeq_k$. For a path $\gamma \in \pi_1^k[G,v,w]$, we write $[\gamma]_k$ to indicate the equivalence class of $\simeq_k$ containing $\gamma$, and call it the \emph{closed $k$-homotopy class of $\gamma$}.

\begin{remark}
Consider the following condition on $\Omega [G,v, w]$:
\begin{enumerate}
\item[(B)$'_k$] $l(\gamma) = l(\gamma')$ and $\# \{ i \mid \gamma(i) \ne \gamma'(i)\} < k$.
\end{enumerate}
It is straightforward to see that the smallest equivalence relation containing (A) and (B)$'_k$ coincides with $\simeq_k$.
\end{remark}

Let $(G,v)$ be a based graph. We write $\pi_1^k[G,v]$ to mean $\pi_1^k[G,v,v]$. Then, the concatenation of paths induces a map
\[ \pi_1^k[G,v] \times \pi_1^k[G,v] \to \pi_1^k[G,v], \quad [\gamma]_k \cdot [\delta]_k \mapsto [\gamma \cdot \delta]_k.\]
It is straightforward to see that this is a group operation. Note that for $[\alpha] \in \pi_1^k[G,v]$, the inverse is given by $[\bar{\alpha}]$. We call this group $\pi_1^k[G,v]$ the \emph{closed $k$-fundamental group of the based graph $(G,v)$}.

\begin{proposition} \label{proposition KT isomorphism}
Let $k$ be an integer greater than $1$. Then, for a based graph $(G,v)$, there is an isomorphism
\[ \pi_1^k[G,v] \to \vec{\pi}_1^k[G,v],\]
which is natural with respect to graph maps.
\end{proposition}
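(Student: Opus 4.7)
The plan is to construct the isomorphism $\Phi\colon\pi_1^k[G,v]\to\vec{\pi}_1^k(G,v)$ on representatives by sending an undirected path $\gamma\colon P_n\to G$ to its forward-oriented version $\vec{\gamma}\colon\vec{I}_n\to G$ with $\vec{\gamma}(i)=\gamma(i)$, and its inverse $\Psi$ by forgetting the edge directions of a digraph path. Both assignments preserve concatenation and basepoints, so once well-definedness is established they are mutually inverse group homomorphisms. Independence of $\Psi$ from the orientations of the source is exactly Lemma~\ref{lemma direction}, and naturality along a graph map is immediate since orienting forward and forgetting orientations both commute with post-composition.

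The heart of the proof is matching the generating relations. Relation (A) is precisely the Kishimoto--Tong relation $\to_k$ with $s=0$: for the digraph map $h\colon\Gamma_0\to G$ sending the unique vertex to the repeated value, $h\circ\rho_x$ is a single stutter. For relation (B)$_k$, suppose $\gamma=\alpha\cdot\beta\cdot\delta$ and $\gamma'=\alpha\cdot\beta'\cdot\delta$, where $\beta$ and $\beta'$ are length-$k$ paths sharing endpoints $p$, $q$. The two paths together define a digraph map $h\colon\Gamma_k\to G$ with $h(u_i)=\beta(i)$, $h(v_i)=\beta'(i)$, so the $2k$-loop $\overline{\vec{\beta}}\cdot\vec{\beta'}$ at $q$ arises as $h'\circ\rho_{u_k}$ for a suitable $u\leftrightarrow v$ swap $h'$ of $h$, and is therefore $\approx_k$-trivial. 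Inserting it at the end of $\vec{\beta}$ via $\to_k$ yields
\[
\vec{\alpha}\cdot\vec{\beta}\cdot\vec{\delta} \;\approx_k\; \vec{\alpha}\cdot\vec{\beta}\cdot\overline{\vec{\beta}}\cdot\vec{\beta'}\cdot\vec{\delta},
\]
after which the middle $\vec{\beta}\cdot\overline{\vec{\beta}}$ is a loop at $p$ that we need to cancel.

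\smallskip\noindent\emph{Lemma A.} For $k\ge 2$ and any path $f$ in $G$, $f\cdot\bar{f}$ is $\approx_k$-equivalent to a constant path.

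\smallskip I would prove Lemma~A by induction on the length $m$ of $f$. For $m\le k$, pad $f$ by stutters (using $\to_k$ with $s=0$) to a path $f'$ of length exactly $k$, and then take the degenerate digraph map $h\colon\Gamma_k\to G$ with $h(u_i)=h(v_i)=f'(i)$; the loop $h\circ\rho_{u_0}$ is exactly $f'\cdot\overline{f'}$, which is thereby $\approx_k$-trivial. For $m>k$, decompose $f=f_1\cdot f_2$ with $|f_1|=k$ and apply the inductive hypothesis to the inner loop $f_2\cdot\overline{f_2}$.

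The argument that $\Psi$ descends to $\simeq_k$-classes is symmetric: each $\to_k$-insertion with $s=k$ produces an undirected loop of the form $\beta\cdot\overline{\beta'}$ between two length-$k$ paths sharing endpoints, a single application of (B)$_k$ replaces $\beta$ by $\beta'$, and the verbatim $\simeq_k$-analogue of Lemma~A cancels the resulting $\beta'\cdot\overline{\beta'}$; Lemma~\ref{lemma direction} absorbs the orientation choices of the source. The compositions $\Psi\circ\Phi$ and $\Phi\circ\Psi$ then act as the identity on representatives by construction. I expect Lemma~A and its $\simeq_k$-analogue to be the main obstacle: the induction itself is routine, but the treatment of paths shorter than $k$ forces a stutter-padding argument, and the whole proof relies on a preliminary check that both $\approx_k$ and $\simeq_k$ respect concatenation.
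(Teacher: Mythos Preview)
Your approach is exactly the paper's: define $\Phi$ by orienting every path forward and $\Psi$ by forgetting orientations, and invoke Lemma~\ref{lemma direction} to see that $\Phi\circ\Psi$ is the identity on classes. The paper's proof is a single paragraph that declares the well-definedness of $\Phi$ and $\Psi$ ``straightforward'' without further comment, so you are supplying precisely the details it omits.

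One small correction is needed. The $\simeq_k$-analogue of Lemma~A is true, but it does not follow \emph{verbatim} from your directed argument. On the $\approx_k$ side the base case works because $f'\cdot\overline{f'}$ (with $|f'|=k$ and the natural forward/backward orientations) is literally $h\circ\rho_{u_0}$ for the degenerate $h$, so a single $s=k$ move removes it. On the $\simeq_k$ side there is no such one-shot move: a (B)$_k$-move alters at most $k-1$ consecutive interior positions, so it cannot collapse the length-$2k$ loop $f'\cdot\overline{f'}$ in one step, and padding to length $k$ does not help. The fix is immediate and in fact cleaner than padding: for $f=(a_0,\dots,a_n)$ with $n\ge 1$, flatten the tip of $f\cdot\bar f$ by changing the single midpoint value $a_n$ to $a_{n-1}$ (any single-position change is a valid (B)$_k$ move for $k\ge 2$), strip the two resulting stutters via (A) to obtain $f|_{[0,n-1]}\cdot\overline{f|_{[0,n-1]}}$, and induct on $n$. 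With this adjustment the rest of your argument goes through.
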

\begin{proof}
Let $\Phi \colon \pi_1^k[G,v] \to \vec{\pi}_1^k[G,v]$ be the map induced by the correspondence of a path $\gamma \colon P_n \to G$ to $\hat{\gamma} \colon \vec{I}_n \to G$, where $\hat{\gamma}(i) = \gamma(i)$ for every $i = 0,1, \cdots, n$. Let $\Psi \colon \vec{\pi}_1^k[G,v] \to \pi_1^k[G,v]$ be the map induced by the correspondence of $f \colon I_n \to G$ to $\bar{f} \colon P_n \to G$, where $\bar{f}(i) = f(i)$ for every $i = 0,1, \cdots, n$. It is straightforward to see that $\Phi$ and $\Psi$ are well-defined group homomorphisms, and Lemma~\ref{lemma direction} implies that $\Psi$ is the inverse of $\Phi$.
\end{proof}

\subsection{Edge-path group} \label{subsection edge-path}

In this subsection, we recall the edge-path group of simplicial complexes. For details, we refer the reader to \cite{Spanier}.

Let $K$ be a simplicial complex. An \emph{edge-path of length $n$ in $K$} is a sequence of vertices $(v_0, \dots, v_n)$ of $K$ such that for each $i = 1, \dots, n$, the set $\{v_{i-1}, v_i\}$ is a simplex in $K$. For an edge-path $\gamma = (v_0, \dots, v_n)$, $v_0$ is called the \emph{initial vertex of $\gamma$} and $v_n$ is called the \emph{terminal vertex of $\gamma$}. Let $v$ be a vertex of $K$. Let $E(K,v)$ denote the set of all edge-paths that start at $v$ and end at $v$. We define the smallest equivalence relation ``$\simeq$'' on $E(K,v)$ containing the following two conditions:

\begin{enumerate}[(a)]
\item If $v_{i-1} = v_i$, then $(v_0, \dots, v_n) \simeq (v_0, \dots, v_{i-1}, v_{i+1}, \dots, v_n)$.

\item If $\{ v_{i-1}, v_i, v_{i+1}\} \in K$, then $(v_0, \dots, v_n) \simeq (v_0, \dots, v_{i-1}, v_{i+1}, \dots, v_n)$.
\end{enumerate}

Two edge-paths $\gamma$ and $\delta$ are said to be \emph{homotopic} if $\gamma \simeq \delta$. Let $\EEE(K,v)$ denote the quotient set $E(K, v) / \simeq$. The set $\EEE(K,v)$ forms a group under the concatenation of paths, and this group is called the \emph{edge-path group of a based simplicial complex $(K,v)$}.

\begin{theorem}[{see Spanier \cite[Corollary~3.6.17]{Spanier}}] \label{theorem edge-path}
For a based simplicial complex $(K,v)$, $\pi_1(|K|, v)$ and $\EEE(K,v)$ are naturally isomorphic.
\end{theorem}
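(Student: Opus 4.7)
The plan is to construct a natural group isomorphism $\Phi \colon \pi_1^{2k}[G,v] \to \EEE(\NNN^k[G],v)$ and then invoke Theorem~\ref{theorem edge-path} to identify the codomain with $\pi_1(\NNN^k[G],v)$. I would define $\Phi$ on a loop $\gamma \colon P_n \to G$ based at $v$ by recording its vertex sequence $(\gamma(0), \ldots, \gamma(n))$, which is an edge-path in $\NNN^k[G]$ because $G$-adjacent vertices lie in a common closed $1$-neighborhood, hence in a common simplex. Relation (A) of $\simeq_{2k}$ is immediate: inserting a repeated vertex is the edge-path stutter relation (a). For relation (B)$_{2k}$, the decisive observation is that the substring $(\gamma(i_0), \ldots, \gamma(i_0+2k))$ lies entirely in the single simplex $N^k_G[\gamma(i_0+k)]$, because each $\gamma(i_0+j)$ has graph-distance at most $|j-k| \le k$ from $\gamma(i_0+k)$; likewise the replacement substring lies in $N^k_G[\gamma'(i_0+k)]$. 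Any edge-path inside a single simplex is homotopic via iterated (b) to the direct edge between its endpoints, so both substrings collapse to the edge $(\gamma(i_0), \gamma(i_0+2k))$ in $\EEE(\NNN^k[G],v)$, giving $\Phi(\gamma) \simeq \Phi(\gamma')$.

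For the inverse $\Psi$, fix once and for all, for each $1$-simplex $\{w,w'\}$ of $\NNN^k[G]$ (allowing $w=w'$), a vertex $u_{w,w'}$ with $w, w' \in N^k_G[u_{w,w'}]$ and a walk $p_{w,w'} \colon P_{2k} \to G$ from $w$ to $w'$ routed through $u_{w,w'}$, assembled from $G$-walks of length $\le k$ and padded with stutters to total length $2k$ (and for $w=w'$, just the constant walk at $w$). Set $\Psi(w_0, \ldots, w_n) = p_{w_0,w_1} \cdots p_{w_{n-1},w_n}$. Well-definedness with respect to the edge-path relations splits into two cases. An inserted stutter (a) at $w_i$ contributes an extra constant length-$2k$ walk at $w_i$, which reduces to the empty walk by iterated (A). For the $2$-simplex relation (b) with $\{w_{i-1}, w_i, w_{i+1}\} \subset N^k_G[u]$, I would first apply (B)$_{2k}$ to each of $p_{w_{i-1},w_i}$, $p_{w_i,w_{i+1}}$, $p_{w_{i-1},w_{i+1}}$ (fixing endpoints) to replace them by standardized walks routed through the common centre $u$; then pad the right-hand side with $2k$ stutters at $u$ via (A), making both sides walks of length $4k$ from $w_{i-1}$ to $w_{i+1}$ that agree at positions $0, k, 3k, 4k$; finally apply (B)$_{2k}$ on the middle $u$-to-$u$ window of length $2k$ to interchange the differing middle segments.

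The maps $\Phi$ and $\Psi$ are mutually inverse. For $\Phi \circ \Psi$, the vertex sequence of $\Psi(w_0, \ldots, w_n)$ consists of $n$ consecutive length-$2k$ blocks each lying in a single simplex of $\NNN^k[G]$, so each block collapses via iterated (b) back to its endpoint edge $(w_{i-1}, w_i)$, recovering the original edge-path. For $\Psi \circ \Phi$, compare $\Psi(\Phi(\gamma))$ with a length-$2kn$ stutter-padded version of $\gamma$ (obtained via iterated (A)); the two walks agree at every position that is a multiple of $2k$ and differ on length-$2k$ blocks with matching endpoints, so (B)$_{2k}$ on each block equates them. Naturality with respect to based graph maps is automatic because both $\Phi$ and $\Psi$ act at the level of vertex labels, and any graph map $f \colon G \to H$ induces both the simplicial map $\NNN^k[G] \to \NNN^k[H]$ and the homomorphism $\pi_1^{2k}[G,v] \to \pi_1^{2k}[H, f(v)]$. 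The main obstacle I anticipate is the verification of (b) for $\Psi$ described above: reconciling the two walks of different natural lengths requires a precisely choreographed sequence of (A)- and (B)$_{2k}$-moves, and this is where the doubling from $k$ to $2k$ is essential---simplices of $\NNN^k[G]$ correspond to vertices within $G$-distance $k$ of a common centre, so two walks between shared endpoints within such a simplex have length up to $2k$ and are identified precisely by the $2k$-substitution relation. Composing $\Phi$ with the natural isomorphism $\EEE(\NNN^k[G],v) \cong \pi_1(\NNN^k[G], v)$ of Theorem~\ref{theorem edge-path} then yields the claimed natural isomorphism $\pi_1^{2k}[G,v] \cong \pi_1(\NNN^k[G], v)$.
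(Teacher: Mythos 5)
Your proposal does not address the stated theorem. The statement under review is the classical comparison between the topological fundamental group $\pi_1(|K|,v)$ of the geometric realization of an \emph{arbitrary} based simplicial complex and the combinatorial edge-path group $\EEE(K,v)$; in the paper this is quoted from Spanier \cite[Corollary~3.6.17]{Spanier} and not proved. What you have written is instead an outline of an isomorphism $\pi_1^{2k}[G,v]\cong\EEE(\NNN^k[G],v)$, i.e.\ the content of the paper's Theorem~\ref{theorem B}, and in your first and last sentences you explicitly invoke Theorem~\ref{theorem edge-path} to pass from $\EEE(\NNN^k[G],v)$ to $\pi_1(\NNN^k[G],v)$. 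Relative to the statement you were asked to prove, the argument is therefore circular: it assumes exactly the result it should establish, and nothing in it relates edge-paths to genuine continuous loops in a geometric realization.

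A proof of Theorem~\ref{theorem edge-path} must bridge combinatorics and topology for an arbitrary complex $K$: using simplicial approximation (or a van Kampen argument on the $2$-skeleton) one shows that every loop in $|K|$ based at $v$ is homotopic to the realization of an edge-path, and that two edge-paths whose realizations are homotopic in $|K|$ are connected by a finite sequence of the moves (a) and (b); equivalently, one identifies $\EEE(K,v)$ with the group presented by the edges of $K$ (relative to a spanning tree of each component) modulo relations coming from the $2$-simplices, and checks that this presents $\pi_1(|K^{(2)}|,v)\cong\pi_1(|K|,v)$. None of these ingredients appear in your proposal. As a side remark, the material you did write is close in spirit to the paper's own proof of Theorem~\ref{theorem B}, which likewise constructs mutually inverse maps $\Phi$ and $\Psi$ between $\EEE(\NNN^k[G],v)$ and $\pi_1^{2k}[G,v]$; but that is a different theorem, and proving it does not discharge the citation to Spanier.
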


\begin{example} \label{example edge-path}
Let $m$ and $n$ be positive integers. As was mentioned in Example~\ref{example cartesian}, we now show that $\NNN[K_m \square K_n]$ is simply-connected. Consider the basepoint of $K_m \square K_n$ as $(1,1)$, and let $(v_0, \cdots, v_l) \in E(\NNN[K_m \square K_n], (1,1))$. Suppose that $l \ge 2$. It suffices to show that for $j \in \{ 1, \cdots, l-1\}$, $(v_0, \cdots, v_l) \simeq (v_0, \cdots, \hat{v}_j, \cdots, v_l)$.

Set $v_i = (x_i, y_i)$ for $i = 0,1, \cdots, n$ and $v = (x_{j-1}, y_j)$. Then, we have
\begin{eqnarray*}
(v_0, \cdots, v_l) & \simeq & (v_0, \cdots, v_{j-1}, v, v_j, v_{j+1}, \cdots, v_l) \\
& \simeq & (v_0, \cdots, v_{j-1}, v, v_{j+1}, \cdots, v_l) \\
& \simeq & (v_0, \cdots, v_{j-1}, v_{j+1}, \cdots, v_l).
\end{eqnarray*}
Here we use
\[ v_{j-1}, v, v_j \in N_G[(x_{j-1}, y_j)],\quad v, v_j,  v_{j+1} \in N_G[(x_{j+1}, y_j)], \quad v_{j-1}, v, v_{j+1} \in N_G[(x_{j-1}, y_{j+1})],\]
where $G = K_m \square K_n$. This completes the proof.
\end{example}

\subsection{Proof of Theorem~\ref{theorem B}} \label{subsection proof b}

We now begin the proof of Theorem~\ref{theorem B}, which states that $\pi_1(\NNN^k[G], v)$ is naturally isomorphic to $\pi_1^{2k}[G,v]$.

\begin{proof}[Proof of Theorem~\ref{theorem B}]
By Theorem \ref{theorem edge-path}, $\pi_1(\NNN^k[G], v)$ is isomorphic to $\EEE(\NNN^k[G],v)$. Hence, it suffices to show that $\EEE(\NNN^k[G], v)$ and $\pi_1^{2k}[G,v]$ are isomorphic.

We define a group homomorphism $\Phi \colon \EEE(\NNN^k[G], v) \to \pi_1^{2k}[G,v]$ as follows. First, we construct a map $\tPhi \colon E(\NNN^k[G], v) \to \pi_1^{2k}[G,v]$. Let $\gamma = (v_0, \dots, v_n) \in E(\NNN^k[G], v)$. Then, for each $i = 1, \dots, n$ there exists a vertex $w_i$ such that $v_{i-1}, v_i \in N^k_G[w_i]$. Thus, we can choose a path $\alpha_i$ of length $k$ joining $v_{i-1}$ to $w_i$ and a path $\beta_i$ of length $k$ joining $w_i$ to $v_i$. By the definition of the $2k$-homotopy $\simeq_{2k}$ (Subsection~\ref{subsection another formulation}), the $2k$-homotopy class $[\alpha_1 \cdot \beta_1 \cdots \alpha_n \cdot \beta_n]_{2k}$ is independent of the choices of $w_1, \cdots, w_n, \alpha_i, \beta_i$. Hence, the map
\[ \tPhi \colon E(\NNN^k[G], v) \to \pi_1^{2k}[G,v], \quad \gamma \mapsto [\alpha_1 \cdot \beta_1 \cdots \alpha_n \cdot \beta_n]_{2k}\]
is well-defined.

Next, we show that $\tilde{\Phi}$ induces a map $\Phi \colon \mathcal{E}(\mathcal{N}^k[G],v) \to \pi_1^{2k}[G,v]$. Let $\gamma, \gamma' \in E(\NNN^k[G],v)$. Suppose that the pair $\gamma$ and $\gamma'$ of paths satisfies the condition (a) in Subsection~\ref{subsection edge-path}. Namely, we can write $\gamma = (v_0, \cdots, v_n)$ and $\gamma' = (v_0, \cdots, v_i, v_i, \cdots, v_n)$. Let $w_j \in V(G)$ such that $v_{j-1}, v_j \in N^k_G[w_j]$, $\alpha_j$ a path of length $k$ joining $v_{j-1}$ to $w_j$, and $\beta_j$ a path of length $k$ joining $w_j$ to $v_j$. Then, we have
\begin{eqnarray*}
\tilde{\Phi}(\gamma') &=& [\alpha_1 \cdot \beta_1 \cdots \alpha_i \cdot \beta_i \cdot \bar{\beta}_i \cdot \beta_i \cdot \alpha_{i+1} \cdots \beta_n]_{2k} = [\alpha_1 \cdot \beta_1 \cdots \alpha_n \cdot \beta_n]_{2k} = \tilde{\Phi}(\gamma).
\end{eqnarray*}

Next, suppose that the pair of paths $\gamma$ and $\gamma'$ satisfies the condition (b) in Subsection~\ref{subsection edge-path}. Then, we can write $\gamma = (v_0, \cdots, v_n)$ and $\gamma' = (v_0, \cdots, v_{i-1}, v_{i+1}, \cdots, v_n)$, where $\{ v_{i-1}, v_i, v_{i+1}\} \in \NNN^k[G]$. Let $w \in V(G)$ so that $v_{i-1}, v_i, v_{i+1} \in N^k_G[w]$, and $\alpha$ a path of length $k$ from $v_{i-1}$ to $w$, $\beta$ a path of length $k$ joining $w$ to $v_{i+1}$, and $\delta$ a path of length $k$ from $w$ to $v_i$. For $j \in \{ 1, \cdots, n\} \setminus \{ i, i +1\}$, let $w_j \in V(G)$ such that $v_{i-1}, v_i \in N^k_G[w_j]$, and let $\alpha_j$ be a path from $v_{j-1}$ to $w_j$ and $\beta_j$ be a path from $w_j$ to $v_j$. Then, we have
\begin{eqnarray*}
\tilde{\Phi}(\gamma) &=& [\alpha_1 \cdot \beta_1 \cdots \alpha_{i-1} \cdot \beta_{i-1} \cdot \alpha \cdot \delta \cdot \bar{\delta} \cdot \beta \cdot \alpha_{i+2} \cdot \beta_{i+2} \cdots \alpha_n \cdot \beta_n]_{2k} \\
&=& [\alpha_1 \cdot \beta_1 \cdots \alpha_{i-1} \cdot \beta_{i-1} \cdot \alpha \cdot \beta \cdot \alpha_{i+2} \cdot \beta_{i+2} \cdots \alpha_n \cdot \beta_n]_{2k}= \tilde{\Phi}(\gamma').
\end{eqnarray*}

Thus, we have completed the proof that $\gamma \simeq \gamma'$ implies $\tPhi(\gamma) = \tPhi(\gamma')$. Hence, $\tPhi$ induces a map $\Phi \colon \EEE(\NNN^k[G],v) \to \pi_1^{2k}[G,v]$. It immediately follows from the definition of $\Phi$ that it is a group homomorphism. Hence, to show that $\Phi$ is a group isomorphism, it suffices to prove the existence of the inverse $\Psi \colon \pi_1^{2k}[G,v] \to \mathcal{E}(\mathcal{N}^k[G],v)$.

Let $\gamma \colon P_n \to G$ be a closed path of $(G,v)$. We define $\hgamma \colon P_{2k \lceil n/2k \rceil} \to G$ as follows:
\[ \hgamma(i) = \begin{cases}
\gamma(i) & (i \le n) \\
v & (i \ge n).
\end{cases}\]
Here, $\lceil - \rceil$ denotes the ceiling function. Define $\hPsi \colon \Omega [G,v] \to E(\mathcal{N}^k[G],v)$ by
\[\hPsi(\gamma) = \left(\hgamma(0), \hgamma(2k), \dots, \hgamma \left(2k \left\lceil \frac{n}{2k} \right\rceil \right) \right).\]

We show that $\hPsi$ induces a map $\Psi \colon \pi_1^{2k}[G,v] \to \mathcal{E}(\mathcal{N}^k[G],v)$, i.e., $\gamma \simeq_{2k} \gamma'$ implies $\hPsi(\gamma) \simeq \hPsi(\gamma')$.

First, consider the case where the pair of paths $\gamma$ and $\gamma'$ satisfies the condition (A) in Subsection~\ref{subsection KY}. That is, if $l(\gamma) = n$, then $l(\gamma') = n+1$, and there is $x \in \{ 0, 1, \dots, n\}$ such that $\gamma(i) = \gamma'(i)$ for $i \le x$ and $\gamma(i) = \gamma'(i+1)$ for $i \ge x$. Let $m$ be an integer such that $2km \le x < 2k(m+1)$. In this case, we have
\[ \hPsi(\gamma) = \Big(\hgamma(0), \dots, \hgamma \left(2k \left\lceil \frac{n}{2k} \right\rceil \right) \Big),\ \hPsi(\gamma') = \Big(\hgamma'(0), \dots, \hgamma'\left(2k \left \lceil \frac{n+1}{2k} \right\rceil \right) \Big).\]
Suppose that $n$ is not a multiple of $2k$. Then, we have $2k \lceil n / 2k \rceil = 2k \lceil (n+1) / 2k \rceil$. Since
\[ \hgamma(2ki), \hgamma'(2k(i+1)), \hgamma(2k(i+1)) \in N^k_G[\hgamma(2ki + k)] \quad \text{and}\]
\[ \hgamma'(2ki), \hgamma'(2k(i+1)), \hgamma(2k(i+1)) \in N^k_G[\hgamma'(2ki + k)],\]
we have $\hPsi(\gamma) \simeq \hPsi(\gamma')$ (see Figure~\ref{figure homotopic}).

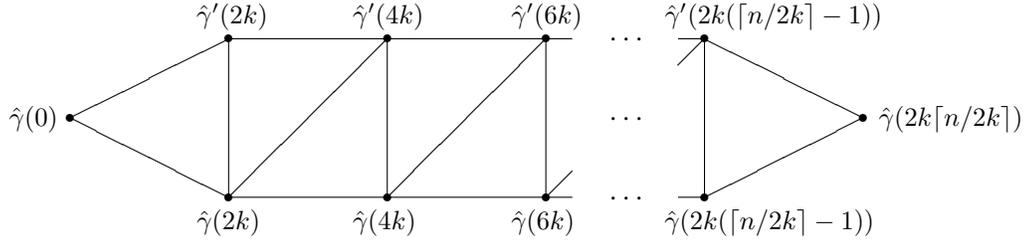
\begin{figure}
\centering
\begin{picture}(320,80)(0,-8)
\put(0,30){\circle*{3}}
\put(60,0){\circle*{3}}
\put(60,60){\circle*{3}}

\put(-23,27){\footnotesize $\hgamma(0)$}
\put(306,27){\footnotesize $\hgamma(2k \lceil n / 2k \rceil)$}

\put(48,-12){\footnotesize $\hgamma(2k)$}
\put(107,-12){\footnotesize $\hgamma(4k)$}
\put(167,-12){\footnotesize $\hgamma(6k)$}

\put(225,-12){\footnotesize $\hgamma(2k (\lceil n / 2k \rceil - 1))$}
\put(225,66){\footnotesize $\hgamma'(2k (\lceil n / 2k \rceil - 1))$}

\put(48,66){\footnotesize $\hgamma'(2k)$}
\put(107,66){\footnotesize $\hgamma'(4k)$}
\put(167,66){\footnotesize $\hgamma'(6k)$}

\put(0,30){\line(2,1){60}}
\put(0,30){\line(2,-1){60}}

\put(300,30){\line(-2,1){60}}
\put(300,30){\line(-2,-1){60}}

\put(60,0){\line(0,1){60}}
\put(120,0){\line(0,1){60}}
\put(180,0){\line(0,1){60}}
\put(240,0){\line(0,1){60}}

\put(60,60){\line(1,0){130}}
\put(60,0){\line(1,0){130}}
\put(180,0){\line(1,1){10}}

\put(204,-3){\small $\cdots$}
\put(204,27){\small $\cdots$}
\put(204,57){\small $\cdots$}

\put(240,0){\line(-1,0){10}}
\put(240,60){\line(-1,0){10}}
\put(240,60){\line(-1,-1){10}}

\put(60,0){\line(1,1){60}}
\put(120,0){\line(1,1){60}}

\put(120,0){\circle*{3}}
\put(120,60){\circle*{3}}

\put(180,0){\circle*{3}}
\put(180,60){\circle*{3}}

\put(240,0){\circle*{3}}
\put(240,60){\circle*{3}}

\put(300,30){\circle*{3}}
\end{picture}
\caption{$\hPsi(\gamma) \simeq \hPsi(\gamma')$}
\label{figure homotopic}
\end{figure}

Next, we consider the case where $n$ is a multiple of $2k$. Define $\delta \colon P_{n+1} \to G$ and $\delta' \colon P_{n+2} \to G$ to be the paths
\[ \delta(i) = \begin{cases}
\gamma(i) & i \le n \\
v & i = n+1
\end{cases} \quad \text{and} \quad \delta'(i) = \begin{cases}
\gamma'(i) & i \le n+1 \\
v & i = n+2.
\end{cases}\]
Since the pair $\delta$ and $\delta'$ satisfies (A) and $l(\delta)$ is not a multiple of $2k$, we have $\hPsi(\delta) \simeq \hPsi(\delta')$ by the previous paragraph. It is clear that $\hPsi(\gamma) \simeq \hPsi(\delta)$ and $\hPsi(\gamma') = \hPsi(\delta')$. Thus we have completed the proof of $\hPsi(\gamma) \simeq \hPsi(\gamma')$.

Thus, we have shown that if the pair $\gamma$ and $\gamma'$ satisfies the condition (A) in Subsection~\ref{subsection KY}, then $\hPsi(\gamma) \simeq \hPsi(\gamma')$. Next, we consider the case where the pair $\gamma$ and $\gamma'$ satisfies the condition (B)$_{2k}$. Namely, $l(\gamma) = l(\gamma')$ and there is $x \in \{ 0, 1, \dots, n\}$ such that $\gamma(i) = \gamma'(i)$ holds for $i \le x$ and $\gamma(i) = \gamma'(i)$ holds for $i \ge x + 2k$. Let $j$ be a non-negative integer such that $x + j$ is a multiple of $2k$. We define $\delta \colon P_{n+j} \to G$ and $\delta' \colon P_{n+j} \to G$ as:
\[ \delta(i) = \begin{cases}
v & (i \le j) \\
\gamma(i - j) & (i \ge j),
\end{cases}\quad \delta'(i) = \begin{cases}
v & (i \le j) \\
\gamma'(i - j) & (i \ge j)
\end{cases}\]
In this case, $\hPsi(\delta) = \hPsi(\delta')$. Since the homotopy class $\hPsi(\gamma)$ is invariant with respect to (A), we have
\[ \hPsi(\gamma) \simeq \hPsi(\delta) = \hPsi(\delta') \simeq \hPsi(\gamma'). \]

Hence, we have shown that the map $\hPsi \colon \Omega[G,v] \to E(\NNN^k[G],v)$ induces a map $\Psi \colon \pi_1^{2k}[G,v] \to \EEE(\NNN^k[G],v)$. By the definitions of $\Phi$ and $\Psi$, it is clear that $\Psi$ is the inverse of $\Phi$, and hence $\Phi$ is a group isomorphism. It is also clear that the isomorphism $\Phi \colon \pi_1(\NNN^k[G],v) \xrightarrow{\cong} \pi_1^{2k}[G,v]$ is natural with respect to graph maps. This completes the proof of Theorem~\ref{theorem B}.
\end{proof}

\appendix

\section{The case of directed graphs}

\begin{figure}
\centering
\begin{picture}(80,40)(0,0)
\put(0,20){\circle*{4}}
\put(40,20){\circle*{4}}
\put(80,20){\circle*{4}}
\put(0,20){\vector(1,0){23}}
\put(80,20){\vector(-1,0){23}}
\put(0,20){\line(1,0){40}}
\put(80,20){\line(-1,0){40}}
\end{picture}
\caption{Digraph $X_1$}
\label{figure non-isomorphic}
\end{figure}
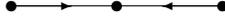

In this paper, we have mainly discussed graphs rather than digraphs. Here, we consider the analogues of the closed $k$-neighborhood complex for a digraph. We then show that the analogue of Theorem \ref{theorem B} does not hold for digraphs.


\begin{definition} \label{definition closed k}
Let $X$ be a digraph, $v$ a vertex of $X$ and $k$ a positive integer. Define the \emph{right closed $k$-neighborhood $\rN^k_X[v]$ of $v$ in $X$} recursively as follows:
\[ \rN^1_X[v] = \rN_X[v],\quad \rN^{k+1}_X[v] = \bigcup_{w \in \rN^k_X[v]} \rN_X[w] \quad (k \ge 1).\]
Similarly, define the \emph{left closed $k$-neighborhood $\lN^k_X[v]$ of $v$ in $X$} recursively as follows:
\[ \lN^1_X[v] = \lN_X[v],\quad \lN^{k+1}_X[v] = \bigcup_{w \in \lN^k_X[v]} \lN_X[w] \quad (k \ge 1).\]

We define the \emph{right closed $k$-neighborhood complex $\rNNN^k[X]$ of a digraph $X$} as follows: The ground set of $\rNNN^k[X]$ is the vertex set $V(X)$ of $X$. The set of simplices of $\rNNN^k[X]$ is the set of finite subsets of $V(X)$ contained in the right $k$-neighborhood of some vertex:
\[ \rNNN^k[X] = \{ \sigma \subset V(X) \mid \text{$\# \sigma < \infty$ and $\sigma \subset \rN^k_X[v]$ for some $v \in V(X)$}\}.\]
Similarly, the left $k$-neighborhood complex $\lNNN^k[X]$ of a digraph $X$ is defined as follows: The ground set of $\lNNN^k[X]$ is the vertex set $V(X)$ of $X$. The set of simplices of $\lNNN^k[X]$ is
\[ \lNNN^k[X] = \{ \sigma \subset V(X) \mid \text{$\# \sigma < \infty$ and $\sigma \subset \lN^k_X[v]$ for some $v \in V(X)$}\}.\]
\end{definition}

Note that $\rNNN^k[X]$ and $\lNNN^k[X]$ coincide with $\NNN^k[X]$ when $X$ is a graph.

In general, $\rNNN^k[X]$ and $\lNNN^k[X]$ are not homeomorphic. For example, consider the digraph $X_1$ depicted in Figure~\ref{figure non-isomorphic}. Then, $\rNNN^k[X_1]$ is homeomorphic to an interval, and $\lNNN^k[X_1]$ is homeomorphic to a $2$-simplex for every $k$. However, Dowker's classical theorem shows that these are homotopy equivalent:

\begin{theorem}[Dowker's theorem \cite{Dowker}] \label{theorem Dowker}
Let $X$ and $Y$ be sets and $S$ a subset of $X \times Y$. Define the simplicial complexes $K_L$ and $K_R$ as follows: The ground set of $K_L$ is $X$ and the set of simplices of $K_L$ is
\[ K_L = \{ \sigma \subset X \mid \text{$\# \sigma < \infty$ and there is $y \in Y$ such that $\sigma \times \{ y\} \subset S$}\}.\]
The ground set of $K_R$ is $Y$ and the set of simplices of $K_R$ is
\[ K_R = \{ \sigma \subset Y \mid \text{$\# \sigma < \infty$ and there is $x \in X$ such that $\{ x\} \times \sigma \subset S$}\}.\]
Then, $K_L$ and $K_R$ are homotopy equivalent.
\end{theorem}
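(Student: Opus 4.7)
The plan is to apply the nerve lemma symmetrically, in the same spirit as the proof of Theorem~\ref{theorem Nagel--Reiner}. For each $y \in Y$, I would set $U_y = \{ x \in X \mid (x,y) \in S\} \subset X$, so that a finite subset $\sigma \subset X$ lies in $K_L$ if and only if $\sigma \subset U_y$ for some $y$. Let $\Delta^{U_y}$ denote the full simplex on ground set $U_y$, i.e., the simplicial complex consisting of all finite subsets of $U_y$. The family $\{ |\Delta^{U_y}| \mid y \in Y\}$ is then a cover of $|K_L|$ by subcomplexes.

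The next step is to verify that this is a good cover. Each $|\Delta^{U_y}|$ is a (possibly infinite-dimensional) simplex and is therefore contractible. For any finite $\tau = \{ y_1, \dots, y_k\} \subset Y$ one has
\[ |\Delta^{U_{y_1}}| \cap \cdots \cap |\Delta^{U_{y_k}}| = |\Delta^{U_{y_1} \cap \cdots \cap U_{y_k}}|, \]
which is either empty or again a simplex, hence contractible. Applying the nerve lemma \cite[Remark~15.22]{Kozlovbook}, $|K_L|$ is homotopy equivalent to the nerve of this cover.

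To finish, I would identify the nerve with $K_R$. A finite subset $\tau \subset Y$ is a simplex of the nerve precisely when $\bigcap_{y \in \tau} U_y \ne \emptyset$, i.e., when there exists $x \in X$ with $\{x\} \times \tau \subset S$; this is exactly the condition that $\tau \in K_R$. Hence the nerve is isomorphic to $K_R$, and we conclude $|K_L| \simeq |K_R|$.

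The only delicate point is to apply the nerve lemma in the infinite setting, but the cited version applies to covers of CW complexes by subcomplexes with contractible finite intersections, which is exactly our situation; the proof of Theorem~\ref{theorem Nagel--Reiner} already invoked it under the same conditions. Note also the pleasing symmetry: exchanging the roles of $X$ and $Y$ (equivalently, replacing $S$ by its transpose in $Y \times X$) yields the dual cover $\{ |\Delta^{V_x}| \mid x \in X\}$ of $|K_R|$ with $V_x = \{ y \in Y \mid (x,y) \in S \}$, confirming that the homotopy equivalence is genuinely symmetric and that neither side is privileged.
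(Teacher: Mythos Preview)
The paper does not supply its own proof of Theorem~\ref{theorem Dowker}; it is quoted from Dowker's original paper \cite{Dowker} and used as a black box. So there is nothing in the paper to compare against line by line.

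Your argument is correct and is in fact the standard modern proof of Dowker's theorem via the nerve lemma. The cover $\{|\Delta^{U_y}|\}_{y\in Y}$ of $|K_L|$ by full subcomplexes has all finite intersections equal to full simplices $|\Delta^{U_{y_1}\cap\cdots\cap U_{y_k}}|$, hence empty or contractible, and the nerve is precisely $K_R$; the version of the nerve lemma cited in the proof of Theorem~\ref{theorem Nagel--Reiner} applies. One small bookkeeping point worth making explicit: if $U_y=\emptyset$ then $|\Delta^{U_y}|=\emptyset$, so $y$ is not a vertex of the nerve, but neither is it a vertex of $K_R$, so the identification of the nerve with $K_R$ is unaffected.

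For context, Dowker's original 1952 argument is rather different: he works at the level of chains using an acyclic-carrier construction to obtain an isomorphism of homology, and then upgrades to a homotopy equivalence via simplicial approximation and Whitehead's theorem. Your nerve-lemma route is shorter and more conceptual, at the cost of importing the nerve lemma as a nontrivial prerequisite; it also makes the symmetry between $K_L$ and $K_R$ transparent, as you note.
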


\begin{figure}[t]
\centering
\begin{picture}(80,90)(0,-20)
\put(0,20){\circle*{4}}
\put(40,-20){\circle*{4}}
\put(40,60){\circle*{4}}
\put(80,20){\circle*{4}}

\put(0,20){\vector(1,1){22}}
\put(0,20){\line(1,1){40}}
\put(0,20){\vector(1,-1){22}}
\put(0,20){\line(1,-1){40}}
\put(80,20){\vector(-1,1){22}}
\put(80,20){\line(-1,1){40}}
\put(80,20){\vector(-1,-1){22}}
\put(80,20){\line(-1,-1){40}}
\end{picture}
\caption{Digraph $X_2$}
\label{figure counter example}
\end{figure}
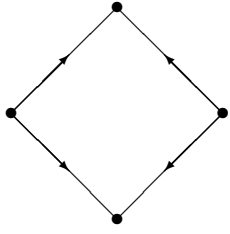

\begin{corollary}
Let $X$ be a digraph and $k$ a positive integer. Then, $\rNNN^k[X]$ and $\lNNN^k[X]$ are homotopy equivalent.
\end{corollary}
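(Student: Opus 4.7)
The plan is to apply Dowker's theorem (Theorem~\ref{theorem Dowker}) with both factors equal to $V(X)$ and with the relation
\[ S = \{ (v,u) \in V(X) \times V(X) \mid u \in \rN^k_X[v] \}. \]
With this choice, $K_R$ is immediately identified with $\rNNN^k[X]$, since $\{x\} \times \sigma \subset S$ unpacks directly to $\sigma \subset \rN^k_X[x]$. Identifying $K_L$ with $\lNNN^k[X]$ requires one auxiliary symmetry lemma; once that is established, the corollary follows at once from Theorem~\ref{theorem Dowker}.

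The auxiliary lemma I will need is the statement $u \in \rN^k_X[v]$ if and only if $v \in \lN^k_X[u]$, which I would prove by induction on $k$. The base case $k = 1$ is immediate from the definitions of $\rN_X[v]$ and $\lN_X[u]$. For the inductive step the cleanest route is to pass to a ``walk characterization'': $u \in \rN^k_X[v]$ if and only if there exists a sequence $v = v_0, v_1, \ldots, v_k = u$ with each $v_{i+1} \in \rN_X[v_i]$, which follows from unfolding the recursion $\rN^{k+1}_X[v] = \bigcup_{w \in \rN^k_X[v]} \rN_X[w]$. A parallel unfolding yields $v \in \lN^k_X[u]$ if and only if there is a sequence $u = u_0, u_1, \ldots, u_k = v$ with each $u_{j+1} \in \lN_X[u_j]$. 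The two characterizations are exchanged by the substitution $u_j = v_{k-j}$, so the symmetry is visible after a one-line verification that $v_{i+1} \in \rN_X[v_i]$ and $u_{k-i} \in \lN_X[u_{k-i-1}]$ describe the same condition on the pair $(v_i, v_{i+1})$.

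Granted the symmetry lemma, ``$\sigma \times \{y\} \subset S$'' reads ``$y \in \rN^k_X[v]$ for every $v \in \sigma$,'' which by the lemma is equivalent to $\sigma \subset \lN^k_X[y]$. Hence $K_L = \lNNN^k[X]$, and applying Theorem~\ref{theorem Dowker} yields the desired homotopy equivalence $\lNNN^k[X] \simeq \rNNN^k[X]$.

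There is no real obstacle: the entire argument is bookkeeping once one selects the correct relation $S$. The only mildly delicate point is making sure that the recursions defining $\rN^{k+1}_X$ and $\lN^{k+1}_X$ are genuinely mutually reverse, which is why the walk characterization is a convenient intermediary. I would also remark at the end that this proof places no finiteness hypothesis on $X$, in parallel with Dowker's theorem as stated.
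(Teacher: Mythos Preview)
Your proof is correct and follows exactly the same approach as the paper: apply Dowker's theorem to the relation $S = \{(v,u) \mid u \in \rN^k_X[v]\}$, identifying $K_R$ with $\rNNN^k[X]$ and $K_L$ with $\lNNN^k[X]$. The paper simply asserts the identification $K_L = \lNNN^k[X]$ without comment, whereas you spell out the underlying symmetry $u \in \rN^k_X[v] \Leftrightarrow v \in \lN^k_X[u]$ via a walk characterization; this is a welcome bit of explicitness but not a different argument.
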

\begin{proof}
Set
\[ S = \{ (v,w) \in V(X) \times V(X) \mid w \in \rN^k_X[v]\} \subset V(X) \times V(X).\]
Then, $K_L$ coincides with $\lNNN^k[X]$, and $K_R$ coincides with $\rNNN^k[X]$. Thus, Theorem~\ref{theorem Dowker} implies $\rNNN^k[X] \simeq \lNNN^k[X]$.
\end{proof}

It is easy to see that the generalization of Theorem~\ref{theorem B} does not hold for digraphs. Namely, for every $k \ge 2$ there is a digraph $X$ such that $\pi_1^{2k}[X]$ and $\pi_1(\rNNN^k[X])$ ($\cong \pi_1(\lNNN^k[X])$) are not isomorphic. Such an example is given by the graph $X_2$ described in \ref{figure counter example}. Indeed, for $k \ge 1$ the right closed $k$-neighborhood complex $\rNNN^k[X_2]$ and left closed $k$-neighborhood complex $\lNNN^k[X_2]$ of $X_2$ are homeomorphic to a square, and hence $\pi_1(\rNNN^k[X_2])$ and $\pi_1(\lNNN^k[X_2])$ are trivial. On the other hand, $\pi_1^k[X_2]$ is isomorphic to $\ZZ$. To see this, let $\tX_2$ be the graph depicted in Figure~\ref{figure universal covering}. Namely, $V(\tX_2) = \ZZ$ and $E(\tX_2) = \{ (2i, 2i\pm 1) \mid i \in \ZZ\}$. Then, the natural covering map $\tX_2 \to X_2$ is indeed $k$-covering for every $k \ge 1$. It is clear that $\pi_1(\tX_2)$ is trivial. Hence, by the covering theory developed in \cite{DIMZ}, we conclude that $\pi_1[X_2] \cong \ZZ$.

\begin{figure}[t]
\centering
\begin{picture}(0,40)

\put(-140,20){\line(1,0){18}}
\put(-160,17){$\cdots$}

\put(-80,20){\circle*{4}}
\put(-80,20){\vector(1,0){23}}
\put(-80,20){\line(1,0){40}}
\put(-40,20){\circle*{4}}

\put(-80,20){\vector(-1,0){23}}
\put(-80,20){\line(-1,0){40}}
\put(-120,20){\circle*{4}}

\put(0,20){\circle*{4}}
\put(0,20){\vector(1,0){23}}
\put(0,20){\line(1,0){40}}
\put(40,20){\circle*{4}}

\put(0,20){\vector(-1,0){23}}
\put(0,20){\line(-1,0){40}}
\put(-40,20){\circle*{4}}

\put(80,20){\circle*{4}}
\put(80,20){\vector(1,0){23}}
\put(80,20){\line(1,0){40}}
\put(120,20){\circle*{4}}

\put(80,20){\vector(-1,0){23}}
\put(80,20){\line(-1,0){40}}
\put(40,20){\circle*{4}}

\put(140,20){\line(-1,0){18}}
\put(146,17){$\cdots$}
\end{picture}
\caption{Digraph $\tX_2$}
\label{figure universal covering}
\end{figure}

\bibliographystyle{abbrvurl} %
\bibliography{reference} %

\end{document}